\def\ps@pprintTitle{%
    \let\@oddhead\@empty
    \let\@evenhead\@empty
    \def\@oddfoot{\footnotesize\itshape
         {} \hfill}%
    \let\@evenfoot\@oddfoot
    }
\newcommand{\CC}{\mathbb C}
\newcommand{\NN}{\mathbb N}
\newcommand{\RR}{\mathbb R}
\newcommand{\ZZ}{\mathbb Z}
\newcommand{\mc}[1]{\mathcal{#1}}
\newcommand{\mrm}[1]{\mathrm{#1}}
\newcommand{\ol}[1]{\overline{#1}}
\newtheorem{theorem}{Theorem}[section]
\newtheorem{lemma}[theorem]{Lemma}
\newtheorem{corollary}[theorem]{Corollary}
\newtheorem{proposition}[theorem]{Proposition}
\theoremstyle{definition}
\newtheorem{observation}[theorem]{Observation}
\newtheorem{definition}[theorem]{Definition}
\newcommand{\GG}{\mathcal{G}}
\newcommand{\reg}{\rho_{\text{reg}}}
\newcommand{\Vreg}{V_{\text{reg}}}
\DeclareMathOperator{\Co}{Co}
\DeclareMathOperator{\Tr}{Tr}
\DeclareMathOperator{\HS}{HS}
\begin{document}

\title{Spectra of Convex Hulls of Matrix Groups}
\author{Eric Jankowski\corref{cor1}}            
\ead{ejankowski@berkeley.edu}
\address{University of California, Berkeley, United States of America}
\cortext[cor1]{Corresponding author}

\author{Charles R. Johnson}
\ead{crjohn@wm.edu}
\address{College of William and Mary, United States of America}

\author{Derek Lim}
\ead{dl772@cornell.edu}
\address{Cornell University, United States of America}

\begin{abstract}
The still-unsolved problem of determining the set of eigenvalues realized by $n$-by-$n$ doubly stochastic matrices, those entrywise-nonnegative matrices with row sums and column sums equal to $1$, has attracted much attention in the last century. This problem is somewhat algebraic in nature, due to a result of Birkhoff demonstrating that the set of doubly stochastic matrices is the convex hull of the permutation matrices. Here we are interested in a general matrix group $G \subseteq GL_n(\CC)$ and the \emph{hull spectrum} $\HS(G)$ of eigenvalues realized by convex combinations of elements of $G$. We show that hull spectra of matrix groups share many nice properties. Moreover, we give bounds on the hull spectra of matrix groups, determine $\HS(G)$ exactly for important classes of matrix groups, and study the hull spectra of representations of abstract groups.
\end{abstract}

\begin{keyword}
Convex hull\sep Inverse eigenvalue problem\sep Matrix groups\sep Matrix representations
\MSC[2010]{15A18\sep 15B51\sep 20G20\sep 47A67\sep 47A75}
\end{keyword}

\maketitle

\section{Introduction}

Our study of this subject was originally motivated by the work done over the last century in investigating $DS_n$, the subset of the complex plane consisting of every complex number that is an eigenvalue of some $n$-by-$n$ doubly stochastic matrix. This problem is part of a larger class of problems related to the Nonnegative Inverse Eigenvalue Problem, which asks which multisets of $n$ complex numbers are achievable as the spectrum of some matrix in $M_n(\RR)$ with nonnegative entries. The analogous problem for row stochastic matrices was solved in 1951 \cite{Karpelevich}. In \cite{PerfectMirsky}, Perfect and Mirsky proved some properties of $DS_n$, including that $DS_n = \bigcup_{k \leq n} \Pi_k$ for $n \leq 3$, where $\Pi_k$ denotes the convex hull of the $k$th roots of unity. More recently, it was shown in \cite{DS4} that $DS_4 = \bigcup_{k \leq 4} \Pi_k$. However, there is an example of a doubly stochastic matrix in \cite{MR} that shows that $DS_5$ contains some points that do not belong to $\bigcup_{k \leq 5}\Pi_k$. Thus, a precise description of $DS_n$ remains elusive for $n \geq 5$.

Birkhoff's theorem \cite{Birkhoff} states that the set of doubly stochastic matrices of size $n$ is precisely the convex hull of the permutation matrices of size $n$---a specific representation of the symmetric group $S_n$. Here, we study the region analogous to $DS_n$ for different matrix groups---given a matrix group $G$ we consider its \textit{hull spectrum} $\HS(G)$, which is the subset of the complex plane consisting of points achieved as eigenvalues of matrices in the convex hull of $G$. Moreover, for an abstract group $\GG$ we study the hull spectra $\HS(\rho(\GG))$ for representations $\rho$ of $\GG$.

We show that hull spectra of finite matrix groups share many of the nice properties that $DS_n$ possesses. Namely, they are contained in the unit disc, star-shaped from any $\eta \in [0,1]$ for nontrivial groups, and bounded above and below by some natural regions related to the group. These properties are notable, since the hull spectrum of a finite set of matrices $S \subset M_n(\CC)$ is not generally well-behaved. For instance, it is not difficult to construct sets $S$ for which $\HS(S)$ achieves values of larger magnitude than the spectral radii of matrices in $S$, or for which $\HS(S)$ is not connected, let alone star-shaped.

 In studying the hull spectra of specific classes of groups, we first classify the hull spectra of (possibly infinite) abelian matrix groups. Applying these results to cyclic subgroups of a general matrix group $G$, we attain lower bounds on the hull spectrum of $G$. We show that for some classes of groups, the lower bound is always attained, so the hull spectra are in fact unions of certain polygons $\Pi_k$. However, we note that there are groups that have hull spectra with exceptional regions outside of these polygons. Finally, we apply our results to $DS_n$, and thus prove properties of $DS_n$ without using the theory of nonnegative matrices or specific facts about doubly-stochastic matrices, which have been used in the past to study $DS_n$. We thus find that the concept of the hull spectra of a matrix group is useful in the study of $DS_n$, and is also an interesting concept in its own right with much nice structure to explore.

\section{Preliminaries and Basic Properties}

\begin{definition}
Let $S \subseteq M_n(\CC)$ be a set of $n$-by-$n$ complex matrices. The \emph{convex hull} of $S$, denoted $\Co(S)$, is the set of all convex combinations of elements of $S$. The \emph{hull spectrum} of $S$, denoted $\HS(S)$, is the set of all complex numbers that occur as eigenvalues of some matrix in $\Co(S)$. In most cases, we will consider the hull spectrum $\HS(G)$ of a matrix group $G \subseteq GL_n(\CC)$.
\end{definition}

Given an abstract group $\GG$, the hull spectra of different $\GG$-representations may vary significantly. For instance, the trivial representation has hull spectrum $\{1\}$. Since a non-faithful representation may be viewed as a representation of a particular quotient group, we primarily consider the hull spectra of faithful representations. This restriction allows us to give tighter bounds (on hull spectra) that better characterize the group.

\begin{definition}
Given an abstract group $\GG$, its \emph{faithful hull spectra} are the hull spectra $\HS(\rho(\GG))$ for faithful $\GG$-representations $\rho$ (if they exist), in which we write $\rho(\GG)$ to denote the image of $\GG$ under $\rho$.
\end{definition}

We now present some relatively straightforward lemmas that will aid us in later proofs.

\begin{lemma}\label{SubgroupLemma}
If $H$ is a matrix subgroup of $G$, then $\HS(H) \subseteq \HS(G)$.
\end{lemma}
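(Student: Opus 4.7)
The plan is to reduce the statement to the obvious inclusion $\Co(H) \subseteq \Co(G)$ at the level of convex hulls, and then observe that the hull spectrum is monotone in the underlying set. Both of these steps are essentially immediate from the definitions, so the main task is just to phrase them carefully.

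First I would show that $\Co(H) \subseteq \Co(G)$. Given any matrix $M \in \Co(H)$, by definition there exist $h_1, \dots, h_k \in H$ and nonnegative reals $\lambda_1, \dots, \lambda_k$ with $\sum \lambda_i = 1$ such that $M = \sum_i \lambda_i h_i$. Since $H$ is a subgroup of $G$, each $h_i$ lies in $G$, so the same convex combination exhibits $M$ as an element of $\Co(G)$. (One could equivalently pad with zero coefficients on the remaining elements of $G \setminus H$, but it is cleaner to use the existing convex combination as-is.)

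Next I would invoke the trivial monotonicity of $\HS$: if $A \subseteq B \subseteq M_n(\CC)$, then every eigenvalue of a matrix in $A$ is an eigenvalue of a matrix in $B$, so $\HS(A) \subseteq \HS(B)$. Applying this to $A = \Co(H)$ and $B = \Co(G)$, and noting that $\HS(S)$ by definition depends only on $\Co(S)$, yields $\HS(H) \subseteq \HS(G)$.

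There is no genuine obstacle here; the only thing to be slightly careful about is that $H$ being a subgroup (rather than merely a subset) is not actually used, beyond the set-theoretic inclusion $H \subseteq G$. The lemma is really a statement about sets of matrices, and the subgroup hypothesis is present only because that is the context in which the lemma will be applied in the sequel.
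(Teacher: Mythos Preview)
Your proposal is correct and follows exactly the same approach as the paper: establish $\Co(H) \subseteq \Co(G)$ and conclude $\HS(H) \subseteq \HS(G)$ by monotonicity. Your observation that only the set-theoretic inclusion $H \subseteq G$ is actually used, not the subgroup structure, is accurate and worth noting.
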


\begin{proof}
We have $\Co(H) \subseteq \Co(G)$, so $\HS(H) \subseteq \HS(G)$.
\end{proof}

\begin{lemma}\label{IsomorphicRepresentationLemma}
Let $\GG$ be a group, and let $\rho_1, \rho_2$ be isomorphic $\GG$-representations. Then $\HS(\rho_1(\GG)) = \HS(\rho_2(\GG))$.
\end{lemma}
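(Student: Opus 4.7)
The plan is to exploit the fact that isomorphic representations differ by a fixed change of basis, and that similarity preserves both convex combinations and eigenvalues.

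First I would unpack the hypothesis: $\rho_1$ and $\rho_2$ being isomorphic means there exists an invertible matrix $P$ such that $\rho_2(g) = P \rho_1(g) P^{-1}$ for every $g \in \GG$. The key observation is that conjugation by $P$ is a linear map on $M_n(\CC)$, so it commutes with taking convex combinations: for any finite collection $g_1, \dots, g_k \in \GG$ and nonnegative weights $t_1, \dots, t_k$ summing to $1$, we have
\[
\sum_{i=1}^{k} t_i \, \rho_2(g_i) \;=\; \sum_{i=1}^{k} t_i \, P \rho_1(g_i) P^{-1} \;=\; P \left( \sum_{i=1}^{k} t_i \, \rho_1(g_i) \right) P^{-1}.
\]
This shows $\Co(\rho_2(\GG)) = P \, \Co(\rho_1(\GG)) \, P^{-1}$.

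Second, since similar matrices have the same eigenvalues, the eigenvalues realized by elements of $\Co(\rho_2(\GG))$ are exactly those realized by elements of $\Co(\rho_1(\GG))$. Hence $\HS(\rho_2(\GG)) = \HS(\rho_1(\GG))$. There is no real obstacle here; the only thing to be slightly careful about is confirming that the intertwiner $P$ is the same for every group element (which is exactly the definition of representation isomorphism), so that $P$ can be factored out of the convex sum uniformly. The proof is essentially a one-line observation built on this linearity of conjugation.
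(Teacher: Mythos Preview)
Your proposal is correct and follows essentially the same approach as the paper: both use the intertwining matrix from the definition of isomorphic representations, note that conjugation by a fixed invertible matrix commutes with taking convex combinations, and conclude by invoking that similar matrices have identical spectra. Your write-up is slightly more explicit in displaying the convex combination, but the underlying argument is identical.
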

\begin{proof}
Denote by $n$ the degree of $\rho_1$ and $\rho_2$. Since $\rho_1$ and $\rho_2$ are isomorphic, there is $S \in GL_n(\CC)$ such that $\rho_1(g) = S\rho_2(g) S^{-1}$ for all $g \in \GG$. Then all convex combinations in $\Co(\rho_1(\GG))$ are similar to the corresponding convex combinations in $\Co(\rho_2(\GG))$ by $S$, so their eigenvalues coincide.
\end{proof}

\begin{lemma}\label{IrrepLemma}
Let $\GG$ be a finite group and $\rho : \GG \to GL(V)$ a finite-dimensional $\GG$-representation with irreducible direct sum decomposition $\rho = \bigoplus_{i=1}^m \rho_i$. Then $\HS(\rho(\GG)) = \bigcup_{i=1}^m \HS(\rho_i(\GG))$.
\end{lemma}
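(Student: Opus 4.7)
The plan is to reduce to the case where $\rho(g)$ is literally block diagonal for every $g$, and then use the fact that eigenvalues of a block diagonal matrix are the union of eigenvalues of the diagonal blocks.

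More concretely, the decomposition $\rho = \bigoplus_{i=1}^m \rho_i$ means that $\rho$ is isomorphic to the representation $\rho'$ that sends $g$ to the block diagonal matrix with blocks $\rho_1(g), \dots, \rho_m(g)$. By Lemma \ref{IsomorphicRepresentationLemma}, $\HS(\rho(\GG)) = \HS(\rho'(\GG))$, so I may assume from the start that every $\rho(g)$ is already block diagonal in this form.

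For the inclusion $\HS(\rho(\GG)) \subseteq \bigcup_{i=1}^m \HS(\rho_i(\GG))$, I take an arbitrary convex combination $A = \sum_k c_k \rho(g_k) \in \Co(\rho(\GG))$. Because the block diagonal structure is preserved under linear combinations, $A$ is block diagonal with $i$-th block $A_i = \sum_k c_k \rho_i(g_k) \in \Co(\rho_i(\GG))$. The spectrum of a block diagonal matrix is the union of the spectra of its blocks, so any eigenvalue $\lambda$ of $A$ is an eigenvalue of some $A_i$, hence lies in $\HS(\rho_i(\GG))$.

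For the reverse inclusion, given $\lambda \in \HS(\rho_i(\GG))$, choose $c_k \geq 0$ with $\sum_k c_k = 1$ and $g_k \in \GG$ so that $\lambda$ is an eigenvalue of $A_i = \sum_k c_k \rho_i(g_k)$. Using the \emph{same} coefficients and group elements, form $A = \sum_k c_k \rho(g_k) \in \Co(\rho(\GG))$; by the block diagonal structure, $A_i$ appears as the $i$-th diagonal block of $A$, so $\lambda$ is also an eigenvalue of $A$ and therefore lies in $\HS(\rho(\GG))$. I do not anticipate any real obstacle: the only thing to be careful about is invoking Lemma \ref{IsomorphicRepresentationLemma} at the outset so that the block diagonal structure is literal rather than up to conjugation, after which the argument is a direct manipulation of eigenvalues of block diagonal matrices.
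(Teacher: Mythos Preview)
Your proposal is correct and follows essentially the same approach as the paper: pick a basis so that each $\rho(g)$ is block diagonal (the paper says ``block upper triangular,'' which suffices for the eigenvalue argument), and then use that the spectrum of such a matrix is the union of the spectra of its diagonal blocks. Your write-up is simply more explicit about both inclusions and about invoking Lemma~\ref{IsomorphicRepresentationLemma} to justify the change of basis.
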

\begin{proof}
Since $\rho = \bigoplus_{i=1}^m \rho_i$, we may choose a basis of $V$ such that each element of $\rho(\GG)$ is block upper triangular with blocks $\rho_i(\GG)$ along the diagonal. Then the spectrum of any element of $\Co(\rho(\GG))$ is the union of the spectra of the corresponding elements of $\Co(\rho_i(\GG))$ for $i=1,...,m$.
\end{proof}

For a matrix group $G$, the geometry of $\HS(G)$ is now considered. In particular, we discuss the set of \emph{star points} of $\HS(G)$, or points $\lambda \in \HS(G)$ such that for any $\lambda' \in \HS(G)$, all points on the line segment between $\lambda$ and $\lambda'$ lie in $\HS(G)$. In this case, we say that $\HS(G)$ is \emph{star-shaped} from $\lambda$.

\begin{lemma}\label{StarConvexLemma}
Let $S \subseteq M_n$, and suppose $\Co(S)$ contains $\eta I$ for some $\eta \in \CC$. Then $\HS(S)$ is star-shaped from $\eta$. In particular, for a matrix group $G$, $\HS(G)$ is star-shaped from 1.
\end{lemma}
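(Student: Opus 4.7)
The plan is to exploit the convexity of $\Co(S)$ directly: given any eigenvalue $\lambda \in \HS(S)$, I will form an explicit one-parameter family of matrices in $\Co(S)$ whose spectra trace out the segment from $\eta$ to $\lambda$.

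First I would take $\lambda \in \HS(S)$ arbitrary, so there exists $A \in \Co(S)$ with $Av = \lambda v$ for some nonzero $v \in \CC^n$. For each $t \in [0,1]$, define
\[
B_t = tA + (1-t)\eta I.
\]
Since both $A$ and $\eta I$ lie in $\Co(S)$ by hypothesis, and $\Co(S)$ is convex, $B_t \in \Co(S)$ for every $t \in [0,1]$. A direct computation gives $B_t v = (t\lambda + (1-t)\eta)v$, so $t\lambda + (1-t)\eta$ is an eigenvalue of $B_t$, hence lies in $\HS(S)$. As $t$ ranges over $[0,1]$, this sweeps out the entire line segment from $\eta$ (at $t=0$) to $\lambda$ (at $t=1$), which is exactly the definition of being star-shaped from $\eta$.

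For the ``in particular'' clause, I simply observe that if $G$ is a matrix group then $I \in G \subseteq \Co(G)$, so the first part applies with $S = G$ and $\eta = 1$, yielding that $\HS(G)$ is star-shaped from $1$.

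There is no real obstacle here: the argument is essentially a one-line application of the linearity of eigenvalue--eigenvector pairs under adding a scalar multiple of the identity, together with convexity of $\Co(S)$. The only point requiring any care is to note that the eigenvector $v$ of $A$ remains an eigenvector of $B_t$ with the expected shifted eigenvalue, which follows immediately since $I$ commutes with everything.
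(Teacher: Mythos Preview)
Your proof is correct and is essentially the same as the paper's: both take a convex combination of a matrix in $\Co(S)$ with $\eta I$ and use that the spectrum shifts linearly, tracing out the segment from $\eta$ to $\lambda$. The only cosmetic difference is that you exhibit the eigenvector explicitly, whereas the paper simply asserts the resulting eigenvalue.
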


\begin{proof}
Let $\lambda$ be an eigenvalue of the convex combination $C \in \Co(S)$. Since $\eta I \in \Co(S)$, we have $(1-\alpha)C + \alpha \eta I \in \Co(S)$ and hence the eigenvalue $(1-\alpha)\lambda + \alpha \eta$ is in $\HS(S)$ for any $\alpha \in [0,1]$. Thus, the whole line segment between $\eta$ and $\lambda$ is contained in $\HS(S)$ Any matrix group $G$ contains the identity, and hence is star-shaped from $1$.
\end{proof}

\begin{lemma}\label{ContainsZeroLemma}
Let $\rho : \GG \to GL(V)$ be a nontrivial irreducible representation of the finite group $\GG$. Then the zero matrix belongs to $\Co(\rho(G))$. In particular, $\HS(\rho(\GG))$ is star-shaped from $0$.
\end{lemma}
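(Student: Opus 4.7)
The plan is to exhibit an explicit convex combination equal to the zero matrix, namely the uniform average $P = \frac{1}{|\mathcal{G}|} \sum_{g \in \mathcal{G}} \rho(g)$. This is manifestly a convex combination of the elements of $\rho(\mathcal{G})$, so it lies in $\mathrm{Co}(\rho(\mathcal{G}))$, and the whole task is to show $P = 0$.

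First I would observe the standard fact that left-translation permutes $\mathcal{G}$, which gives $\rho(h) P = P$ for every $h \in \mathcal{G}$. In particular $P$ commutes with every element of $\rho(\mathcal{G})$, so since $\rho$ is irreducible, Schur's lemma forces $P = \lambda I$ for some scalar $\lambda \in \mathbb{C}$. The same translation identity gives $P^2 = P$ after one line of manipulation, so $\lambda^2 = \lambda$ and thus $\lambda \in \{0,1\}$. The case $\lambda = 1$ would yield $P = I$; combined with $\rho(h) P = P$ this forces $\rho(h) = I$ for every $h \in \mathcal{G}$, contradicting the nontriviality of $\rho$. Hence $\lambda = 0$, i.e., $P = 0 \in \mathrm{Co}(\rho(\mathcal{G}))$.

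The "in particular" claim is then immediate from Lemma \ref{StarConvexLemma} applied with $\eta = 0$ and $S = \rho(\mathcal{G})$. I do not anticipate a real obstacle here: the only substantive input is Schur's lemma, which applies because $\rho$ is irreducible over $\mathbb{C}$ and finite-dimensional, and the remaining manipulations are routine rearrangements of the group sum.
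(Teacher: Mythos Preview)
Your argument is correct and uses the same convex combination as the paper, namely the uniform group average $P = \frac{1}{|\mathcal{G}|}\sum_{g}\rho(g)$; the only difference is in how you conclude $P=0$. The paper observes that $P$ is idempotent and then invokes the character identity $\sum_{g}\chi(g)=0$ for a nontrivial irreducible to get $\Tr(P)=0$, whence $P=0$. You instead use Schur's lemma to force $P=\lambda I$, and then idempotence together with nontriviality to rule out $\lambda=1$. Both routes are standard and essentially equivalent; your version avoids quoting the character relation at the cost of a small extra step (you should also note $P\rho(h)=P$ by right translation, so that the commutation hypothesis of Schur's lemma is fully verified). The star-shapedness conclusion via Lemma~\ref{StarConvexLemma} is identical in both.
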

\begin{proof}
Notice that the group average $A = \frac{1}{\abs{\GG}} \sum_{g \in \GG} \rho(g)$ is an idempotent element of $\Co(\rho(\GG))$. Using the well-known result (see for instance \cite{Serre}) that the sum of character values of a nontrivial irreducible representation satisfies $\sum_{g \in \GG} \chi(g) = 0$, we find that $\Tr(A) = 0$. Since $A$ is idempotent, it follows that $A = 0 \in \Co(\rho(\GG))$, as desired. Applying Lemma \ref{StarConvexLemma} shows that $\HS(\rho(\GG))$ is star-shaped from $0$.
\end{proof}

\begin{corollary}\label{StarCorollary}
Let $G$ be a nontrivial finite matrix group. Then $\HS(G)$ is star-shaped from every point in $[0,1]$.
\end{corollary}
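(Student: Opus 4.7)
The plan is to reduce via Lemma \ref{IrrepLemma} to the irreducible summands of $G$ and then apply Lemmas \ref{ContainsZeroLemma} and \ref{StarConvexLemma}. Specifically, viewing the inclusion $G \hookrightarrow GL_n(\CC)$ as a faithful representation of $G$, Maschke's theorem gives a decomposition into irreducibles $\rho_1, \dots, \rho_m$ (up to conjugation, which by Lemma \ref{IsomorphicRepresentationLemma} does not affect the hull spectrum), so that $\HS(G) = \bigcup_{i=1}^m \HS(\rho_i(G))$ by Lemma \ref{IrrepLemma}.

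Next, for each nontrivial summand $\rho_i$, I will argue that $\HS(\rho_i(G))$ is star-shaped from every $\eta \in [0,1]$. The key input is Lemma \ref{ContainsZeroLemma}, which places the zero matrix in $\Co(\rho_i(G))$; combined with $I \in \Co(\rho_i(G))$ and convexity, this puts the scalar matrix $\eta I = (1-\eta)\cdot 0 + \eta \cdot I$ in $\Co(\rho_i(G))$ for every $\eta \in [0,1]$, and Lemma \ref{StarConvexLemma} yields the claim. Since $G$ is nontrivial, at least one $\rho_i$ must be nontrivial (otherwise every $g \in G$ would act as the identity on every block, forcing $G = \{I\}$), and in particular $\eta \in \HS(\rho_i(G)) \subseteq \HS(G)$ for at least one $i$.

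The final step is to upgrade these per-summand results to $\HS(G)$. Given $\lambda \in \HS(G)$, it lies in some $\HS(\rho_j(G))$; if $\rho_j$ is nontrivial the segment from $\eta$ to $\lambda$ stays in that summand's hull spectrum, which is contained in $\HS(G)$. I expect the only real obstacle to be the trivial summands: such a $\rho_j$ contributes only the point $\lambda = 1$, and $\{1\}$ is certainly not star-shaped from $\eta \neq 1$ by itself, so star-shapedness of $\HS(G)$ cannot be obtained summand by summand at $\lambda = 1$. The remedy is to observe that $1 \in \HS(\rho_i(G))$ already for any nontrivial $\rho_i$ (as an eigenvalue of the identity block), and that $\HS(\rho_i(G))$ is star-shaped from $\eta$ by the previous paragraph, so the segment from $\eta$ to $1$ lives inside a nontrivial summand's contribution to the union. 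Handling this one case cleanly is the main subtlety, after which the corollary follows directly.
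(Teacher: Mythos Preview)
Your proof is correct and follows the same overall strategy as the paper: decompose $G$ into irreducibles via Maschke, invoke Lemma~\ref{ContainsZeroLemma} on the nontrivial summands, and absorb the contribution of any trivial summand (namely the point $1$) into a nontrivial summand's hull spectrum. The one genuine difference lies in how you pass from the endpoints $\{0,1\}$ to all of $[0,1]$. You argue directly at the level of $\Co(\rho_i(G))$: since this convex set contains both $0$ and $I$, it contains $\eta I$ for every $\eta\in[0,1]$, and Lemma~\ref{StarConvexLemma} then gives star-shapedness from each such $\eta$. The paper instead establishes star-shapedness of $\HS(G)$ from $0$ and from $1$ separately and then appeals to the general fact that the set of star points of any subset of a vector space is convex. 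Your route is slightly more elementary, since it avoids citing that auxiliary fact and works entirely with the convexity of $\Co(\rho_i(G))$; the paper's route is marginally shorter once that fact is granted. Your explicit handling of the trivial-summand case is also more careful than the paper's, which passes over it in a phrase.
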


\begin{proof}
View $G$ as a matrix group representation of some abstract group $\GG$. Since $G$ is nontrivial, the representation contains some nontrivial summands. Each of these summands has a hull spectrum that is star-shaped from both $0$ and $1$, so $\HS(G)$ is also star-shaped from $0$ and $1$. Since the set of star points of a subset of a vector space is convex, the corollary is proven.
\end{proof}

Another important geometric consideration is the relationship of $\HS(G)$ with the unit circle $S_1$ and the closed unit disc $D$. See \cite{MatrixAnalysis} for necessary results on matrix norms.

\begin{lemma}\label{UnitBallLemma}
Let $S \subseteq M_n$ be a collection of matrices, and suppose there is a matrix norm $\|\cdot\|$ such that $\|A\| \leq 1$ for all $A \in S$. Then $\HS(S)$ is contained in the closed unit disc $D$. In particular, we have $\HS(G) \subseteq D$ for any finite matrix group $G$.
\end{lemma}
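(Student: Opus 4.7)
The plan is to prove the two assertions separately: first the general statement about sets $S$ bounded in norm, then the ``in particular'' claim for finite matrix groups.

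For the first part, I would use two standard facts about matrix norms from \cite{MatrixAnalysis}: (a) every matrix norm $\|\cdot\|$ is by definition a vector-space norm that is submultiplicative, so in particular it satisfies the triangle inequality and positive homogeneity; and (b) for any matrix $A$, the spectral radius satisfies $\rho(A) \leq \|A\|$. Take an arbitrary convex combination $C = \sum_{i=1}^k \alpha_i A_i \in \Co(S)$ with $\alpha_i \geq 0$ and $\sum \alpha_i = 1$. By the triangle inequality and homogeneity,
\[
\|C\| \leq \sum_{i=1}^k \alpha_i \|A_i\| \leq \sum_{i=1}^k \alpha_i = 1.
\]
Therefore every eigenvalue $\lambda$ of $C$ satisfies $|\lambda| \leq \rho(C) \leq \|C\| \leq 1$, so $\lambda \in D$. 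Since $C$ was arbitrary, $\HS(S) \subseteq D$.

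For the ``in particular'' claim, the main task is to exhibit, for any finite matrix group $G \subseteq GL_n(\CC)$, a matrix norm under which every element of $G$ has norm at most $1$. The standard averaging construction does this: starting from the standard Hermitian inner product $\langle \cdot, \cdot \rangle$ on $\CC^n$, define the $G$-averaged inner product
\[
\langle x, y \rangle_G = \frac{1}{|G|} \sum_{g \in G} \langle gx, gy \rangle.
\]
This is positive definite (since it is a sum of positive semidefinite forms including the $g = I$ term), and it is $G$-invariant in the sense that $\langle hx, hy \rangle_G = \langle x, y \rangle_G$ for every $h \in G$, by re-indexing the sum. Let $\|\cdot\|_G$ be the induced vector norm, and let $\vertiii{\cdot}_G$ denote the operator norm it induces on $M_n(\CC)$; this is a matrix norm. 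Then every $h \in G$ is an isometry for $\|\cdot\|_G$, so $\vertiii{h}_G = 1$ for all $h \in G$. Applying the first part to $S = G$ with the norm $\vertiii{\cdot}_G$ yields $\HS(G) \subseteq D$.

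The only mildly delicate step is the averaging construction and verifying that the resulting operator norm is a genuine matrix norm in the sense needed for part (b); but this is classical, and no finiteness of $G$ beyond making the average well-defined is used. I do not anticipate substantive obstacles beyond citing $\rho(A) \leq \|A\|$ and the submultiplicative/triangle structure of matrix norms.
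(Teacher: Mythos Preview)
Your proof is correct and follows essentially the same approach as the paper. For the finite-group case the paper changes basis so that each element becomes unitary (invoking Lemma~\ref{IsomorphicRepresentationLemma}) and then applies the spectral norm, whereas you build the $G$-invariant inner product directly and use its induced operator norm; these are two equivalent presentations of the same averaging (unitary trick) argument.
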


\begin{proof}
This is clear by convexity of norms and the fact that matrix norms bound the spectral radius. For any convex combination of group elements $\sum_k \alpha_k A_k \in \Co(S)$, it holds that
\[ r \left(\sum_k \alpha_k A_k \right) \leq \norm{\sum_k \alpha_k A_k} \leq \sum_k \alpha_k \norm{A_k} = 1 \]

\noindent in which $r(A)$ is the spectral radius of $A$. If $G$ is a finite matrix group, we know as a basic result of representation theory that there is a change of basis such that each element of $G$ is unitary. By Lemma \ref{IsomorphicRepresentationLemma}, this change of basis does not affect the hull spectrum. Since the spectral norm of a unitary matrix is $1$, the claim follows.
\end{proof}

It is important that the condition of the previous lemma hold for finite matrix groups $G$. Consider the following matrices and note that the condition does not hold for $S_x$:
\[S_x = \Big\{\begin{bmatrix}1 & 2x \\ 0 & 1 \end{bmatrix}, \begin{bmatrix}1 & 0 \\ 2x & 1 \end{bmatrix}\Big\}, \qquad B_x = \begin{bmatrix}1 & x\\ x & 1 \end{bmatrix} \in \Co(S_x) \]
Since $B_x \in \Co(S_x)$, letting $x \to \infty$, we see that the $\HS(S_x)$ contain eigenvalues of arbitrarily high magnitude while each matrix in $S_x$ has only $1$ as its sole unique eigenvalue.

Now we present a result restricting the multisets of eigenvalues achievable by $C \in \Co(G)$ for a finite matrix group $G$.

\begin{proposition}\label{ExtremalEigenvalues}
For $G$ a finite matrix group, a matrix $C \in \Co(G)$ has all eigenvalues on the unit circle if and only if $C$ is an element of $G$.
\end{proposition}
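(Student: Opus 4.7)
The plan is to argue the two directions separately. The forward direction is immediate from the finiteness of $G$: every $A \in G$ satisfies $A^{|G|} = I$, so the minimal polynomial of $A$ divides $x^{|G|} - 1$ and every eigenvalue of $A$ is a root of unity, hence lies on the unit circle. For the reverse direction, I would first invoke the similarity reduction used in Lemma~\ref{UnitBallLemma} so that every element of $G$ is unitary. By Lemma~\ref{IsomorphicRepresentationLemma} together with the fact that similarity preserves both eigenvalues and the property of membership in $G$, this is without loss of generality. Write the given $C = \sum_k \alpha_k A_k$ with each $A_k \in G$ unitary and positive weights $\alpha_k$ summing to $1$.

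The remaining work splits into two steps. \textbf{Step 1:} show that $C$ itself is unitary. The triangle inequality in the spectral norm gives $\|C\| \leq 1$, so a Schur decomposition $C = Q R Q^*$ produces an upper-triangular contraction $R$ with $\|R\| \leq 1$ whose diagonal entries are the unimodular eigenvalues of $C$. Since $\|R e_i\| \leq 1$ and $\|R e_i\|^2 = |r_{ii}|^2 + \sum_{j < i} |r_{ji}|^2 = 1 + \sum_{j < i} |r_{ji}|^2$, every strictly upper entry of $R$ must vanish. Hence $R$ is a diagonal unitary and $C = Q R Q^*$ is unitary.

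\textbf{Step 2:} conclude that $C = A_k$ whenever $\alpha_k > 0$. For any unit vector $v$ we have $\|Cv\| = 1 = \sum_k \alpha_k \|A_k v\|$, so the triangle inequality $\|\sum_k \alpha_k A_k v\| \leq \sum_k \alpha_k \|A_k v\|$ is saturated. Because each $A_k v$ is a unit vector, the equality case of the complex triangle inequality forces all $A_k v$ with $\alpha_k > 0$ to coincide; letting $v$ range shows $A_k = A_j$ for every such pair, so $C$ equals the common value and lies in $G$.

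I expect Step~1 to be the main technical obstacle. The hypothesis alone---that the spectrum of $C$ lies on the unit circle---does not imply that $C$ is normal; it is the interaction of a unimodular spectrum with the contractivity bound that rigidifies $C$ into a unitary, and the Schur reduction is what allows one to exploit both facts simultaneously. Step~2 is a standard equality-case extraction from the triangle inequality, but must be phrased in the complex setting so that the \emph{phases}, and not merely the magnitudes, of the vectors $A_k v$ are seen to agree.
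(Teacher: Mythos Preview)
Your proof is correct, and its overall architecture---reduce to unitary $G$, show that $C$ must itself be unitary, then deduce that the convex combination collapses to a single $A_k$---matches the paper's. The two technical steps, however, are carried out by different means.

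For Step~1, the paper argues via singular values: since the eigenvalues are unimodular, $|\det C| = \sigma_1\cdots\sigma_n = 1$, while convexity of the spectral norm gives $\sigma_1 \le 1$; together these force every $\sigma_i = 1$, so $C$ is unitary. Your Schur-decomposition argument reaches the same conclusion by showing the strictly upper-triangular part of $R$ must vanish because each column of a contraction has norm at most one. Both are short; the determinant route avoids invoking a decomposition, while your approach makes the rigidity of the contraction condition more visible.

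For Step~2, the paper multiplies by $A_1^*$ to write $A_1^*C = \alpha_1 I + M$ with $\|M\| \le 1-\alpha_1$, and then uses the scalar triangle-inequality equality case $|\alpha_1 + \mu_i| = 1$ to force every eigenvalue of the unitary $A_1^*C$ to equal $1$, whence $A_1^*C = I$. Your argument instead applies the equality case of the \emph{vector} triangle inequality directly to $Cv = \sum_k \alpha_k A_k v$, concluding that all $A_k v$ coincide for every unit $v$. Your route is arguably more elementary, since it bypasses the spectral analysis of $A_1^*C$ and works pointwise; the paper's route has the minor advantage of never needing the Hilbert-space equality case, only the scalar one.
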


\begin{proof}
Since $G$ is a finite matrix group, we can assume $A_i \in G$ are all unitary. Let $C = \sum_{i=1}^k \alpha_i A_i$ where $\alpha_i > 0$ and $\sum_i \alpha_i = 1$ be an element of $\Co(G)$, and suppose that $C$ has all eigenvalues on the unit circle. Then $1 = \abs{\det(C)} = \sigma_1 \cdots \sigma_n$ where $\sigma_i$ are the singular values of $C$. 
Due to the convexity of the spectral norm and the fact that $C$ is a convex combination of unitary matrices, which all have spectral norm $1$, we know that the largest singular value satisfies $\sigma_1 \leq 1$, so in fact all singular values of $C$ are $1$. This implies that $C$ is unitary. Then $A_1^* C$ is also unitary and of the form
\[A_1^*C  = \alpha_1 I + \alpha_2 A_1^* A_2 \ldots + \alpha_k A_1^* A_k\]
The eigenvalues of $A_1^*C$ are of the form $\alpha_1 + \mu_i$, where $\mu_i$ is an eigenvalue of $\sum_{i=2}^k \alpha_i A_1^* A_i$. Each $\mu_i$ has magnitude at most $\sum_{i=2}^k \alpha_i$, as we can again apply convexity of the spectral norm since the $A_1^* A_i$ are unitary. Moreover, since $A_1^*C$ is unitary, all of its eigenvalues have magnitude $1$. This forces the eigenvalues of $A_1^*C$ to all be $1$, since this is the case of equality in the triangle inequality $1 = \abs{\alpha_1 + \mu_i} \leq \alpha_1 + \sum_{i=2}^n \alpha_i = 1$. Thus, $A_1^*C = I$ and thus $C = A_1$.
\end{proof}

\section{Abelian Groups}\label{AbelianSection}

We begin by considering groups of $1$-by-$1$ matrices i.e. groups of complex numbers. As we will see in Theorem \ref{AbelianTheorem}, the hull spectrum problem for any abelian group of matrices may be reduced to the $1$-by-$1$ case.

For a subset $S \subseteq GL_n(\CC)$, denote by $\langle S \rangle$ the matrix group generated by $S$. Let $S_1$ and $B_1$ denote the unit circle and open unit disc of the complex plane respectively. Moreover, let $B_\alpha' = B_1 \cup \{e^{2\pi i \alpha n} : n \in \ZZ\}$, and let $\RR^+$ (resp.\ $\RR^-$) be the strictly positive (resp.\ negative) real numbers.

\begin{lemma}
Let $\lambda \in \CC \backslash \{0\}$. Then
\begin{enumerate}
    \item if $\lambda$ is a primitive $n$th root of unity, then $\HS(\langle \lambda \rangle) = \Pi_n$.
    
    \item if $\lambda = e^{2\pi i \alpha}$ is not a root of unity, then $\HS(\langle \lambda \rangle) = B_\alpha'$.
    
    \item if $\lambda \in \RR^+ \backslash\{1\}$, then $\HS(\langle \lambda \rangle) = \RR^+$.
    
    \item if $\lambda \in \RR^- \backslash\{-1\}$, then $\HS(\langle \lambda \rangle) = \RR$.
    
    \item if $\lambda \notin S_1 \cup \RR$, then $\HS(\langle \lambda \rangle) = \CC$.
\end{enumerate}
\end{lemma}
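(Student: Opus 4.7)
The plan is to handle the five cases separately, exploiting the fact that for a group of $1$-by-$1$ matrices the hull spectrum is simply the convex hull, $\HS(G) = \Co(G)$; every assertion therefore reduces to a statement about $\Co(\{\lambda^n : n \in \ZZ\})$ inside $\CC$.

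Cases 1--4 are essentially direct. In Case 1, $\langle \lambda \rangle$ is exactly the set of $n$th roots of unity, whose convex hull is $\Pi_n$ by definition. In Case 2, Kronecker's theorem makes $\{e^{2\pi i \alpha n} : n \in \ZZ\}$ dense in $S_1$; the open disc $B_1$ lies in the hull because each of its points is strictly interior to some chord of $S_1$ whose endpoints can be approximated by group elements, while strict convexity of $\ol{B_1}$ prevents any point of $S_1 \setminus \langle \lambda \rangle$ from being a nontrivial convex combination of points on $S_1$, leaving exactly $B_\alpha'$. Cases 3 and 4 take place entirely on the real line: for $\lambda \in \RR^+ \setminus \{1\}$, the powers $\lambda^n$ are strictly positive with $0$ and $+\infty$ as their only accumulation points, giving $\Co = \RR^+$; for $\lambda \in \RR^- \setminus \{-1\}$ the even and odd powers give unbounded positive and unbounded negative values respectively, and a direct two-term combination of $1$ and $\lambda$ realizes $0$, forcing $\Co = \RR$.

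Case 5 is the main obstacle. Write $\lambda = r e^{i\theta}$ with $r \neq 1$ and $\theta \notin \{0, \pi\}$, and assume $r > 1$ by replacing $\lambda$ with $\lambda^{-1} \in \langle \lambda \rangle$ if needed. The plan is to combine two ingredients: (i) a convex subset of $\CC$ equals $\CC$ if and only if it is unbounded in every direction, since otherwise the Hahn--Banach separation theorem places it inside a closed half-plane; and (ii) the set $\langle \lambda \rangle$ itself is unbounded in every direction. For (ii), given a unit direction $e^{i\psi}$, compute $\mrm{Re}(\lambda^n e^{-i\psi}) = r^n \cos(n\theta - \psi)$. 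If $\theta/(2\pi)$ is irrational, Kronecker density yields arbitrarily large $n$ with $n\theta \equiv \psi \pmod{2\pi}$ to any desired accuracy, so $r^n \cos(n\theta - \psi) \to \infty$. If $\theta/(2\pi) = p/q$ is rational in lowest terms, the hypothesis $\theta \notin \{0, \pi\}$ forces $q \geq 3$, and the finitely many residues $\{n\theta \bmod 2\pi\}$ form a regular $q$-gon on $S_1$ with gap $2\pi/q \leq 2\pi/3$; some residue class $n \equiv n_0 \pmod q$ then satisfies $\cos(n\theta - \psi) \geq \cos(\pi/q) \geq \tfrac{1}{2}$, and within that class $r^n$ can be made arbitrarily large.

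The chief hurdle is Case 5, where one must simultaneously control the argument of $\lambda^n$---via equidistribution in the irrational subcase and the finite orbit structure in the rational subcase---and the magnitude via $r > 1$, then feed the resulting direction-by-direction unboundedness through the convex-separation dichotomy. The other cases reduce to standard density, strict-convexity, or real-line convex-hull arguments once one observes $\HS = \Co$ in the scalar setting.
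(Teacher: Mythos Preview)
Your proof is correct and follows the same case-by-case strategy as the paper, identifying $\HS(\langle\lambda\rangle)$ with $\Co(\{\lambda^n : n \in \ZZ\})$ in $\CC$ and handling each of the five cases directly. The only notable difference is in Case~5, where the paper's two-sentence sketch (arbitrarily large modulus plus ``not all in one half-plane'' implies the hull is $\CC$) is made rigorous in your version via an explicit convex-separation argument together with a rational/irrational dichotomy on $\theta/(2\pi)$.
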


\begin{proof}
We address each case individually. Note that (1) and (2) correspond to the case in which $\lambda \in S_1$, and that (3) and (4) correspond to the case in which $\lambda \in \RR \backslash S_1$. It follows that each nonzero complex number satisfies exactly one of these conditions.
\begin{enumerate}
    \item If $\lambda$ is a primitive $n$th root of unity, then $\HS(\langle \lambda \rangle) = \HS(G)$ where $G$ is the group of $n$th roots of unity. The convex combinations of these are precisely the polygon $\Pi_n$.
    
    \item If $\lambda = e^{2\pi i \alpha}$, then $\HS(\langle \lambda \rangle) = \HS(G)$ where $G = \{e^{2\pi i \alpha n} \mid n \in \ZZ\}$. Since $\lambda$ is not a root of unity, this group is a countable dense subset of $S_1$. Thus, we find that $\HS(\langle \lambda \rangle)$ is the union of this countable dense subset together with the open unit ball, which is precisely $B_\alpha'$.
    
    \item If $\lambda$ is a positive real number not equal to 1, then powers of lambda can attain arbitrarily small and arbitrarily large positive real numbers. Thus, the set of convex combinations of these powers is precisely $\RR^+$.
    
    \item Similarly as above, if $\lambda$ is a negative real number not equal to $-1$, we may find powers of $\lambda$ which are arbitrarily large negative and positive real numbers. It therefore follows that $\HS(\langle \lambda \rangle) = \RR$ in this case.
    
    \item Otherwise, $\lambda$ is a complex number that does not lie on the unit circle and is not real. In this case, there are powers of $\lambda$ with arbitrarily large modulus. Since $\lambda$ is not real, these powers are not all contained in the same half-plane. Thus, their convex hull contains all complex numbers. 
\end{enumerate}
\end{proof}

\begin{definition}
We say that $\lambda \in \CC$ is \emph{of type 1} if it satisfies hypothesis 1 of the above lemma (i.e. if it is an integral root of unity). Similarly, we may classify any other nonzero complex number as type 1, 2, 3, 4, or 5 depending on which hypothesis it satisfies.
\end{definition}

\begin{lemma}
Let $G$ be a subgroup of the nonzero complex numbers, and let $\Lambda \subseteq G$ be any generating set of the group. Then
\begin{enumerate}
    \item if $\Lambda$ contains an element of type 5, then $\HS(G) = \CC$.
    
    \item if $\Lambda$ contains a non-real element of type 1 or 2 and an element of type 3 or 4, then $\HS(G) = \CC$.
    
    \item if $\Lambda \subseteq S_1$, then $\HS(G)$ is a union of some $\Pi_n$'s with some $B_\alpha'$'s.
    
    \item if $\Lambda \subseteq \RR^+$ but $\Lambda \not \subseteq S_1$, then $\HS(G) = \RR^+$
    
    \item if $\Lambda \subseteq \RR$ but $\Lambda \not \subseteq S_1$ and $\Lambda \not \subseteq \RR^+$, then $\HS(G) = \RR$
\end{enumerate}
\end{lemma}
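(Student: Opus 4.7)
The plan is a case-by-case argument combining the preceding lemma with Lemma \ref{SubgroupLemma}. For any $g \in G$, the cyclic subgroup $\langle g \rangle \subseteq G$ gives $\HS(\langle g \rangle) \subseteq \HS(G)$, supplying a library of lower bounds indexed by elements of $G$ whose types have already been classified. Upper bounds come for free from the fact that, in the $1 \times 1$ setting, $\HS(G) = \Co(G)$, which sits inside whatever convex region of $\CC$ happens to contain $G$.

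Cases (1), (2), (4), and (5) each reduce to exhibiting a single element of $G$ whose type already forces the target region. Case (1) is immediate. For (2), if $\mu \in \Lambda$ is non-real of type 1 or 2 and $\nu \in \Lambda$ is type 3 or 4, then $\mu\nu \in G$ is both non-real (product of a non-real number and a nonzero real) and off $S_1$ (modulus $|\nu| \neq 1$), hence type 5, and (1) applies. For (4), $\Lambda \subseteq \RR^+$ forces $G \subseteq \RR^+$ and so $\Co(G) \subseteq \RR^+$, while $\Lambda \not\subseteq S_1$ supplies a type 3 generator $\mu$ with $\RR^+ = \HS(\langle\mu\rangle) \subseteq \HS(G)$. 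For (5), either $\Lambda$ already contains a type 4 element (done), or every negative element of $\Lambda$ equals $-1$; the hypotheses $\Lambda \not\subseteq \RR^+$ and $\Lambda \not\subseteq S_1$ then force $-1 \in \Lambda$ together with some type 3 generator $\mu \in \Lambda$, so $-\mu \in G$ is type 4 and gives $\RR \subseteq \HS(G) \subseteq \RR$.

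Case (3) is the heart. Since $\Lambda \subseteq S_1$, we have $G \subseteq S_1$. The key geometric fact, from the equality condition in the triangle inequality for $\CC$, is that any nontrivial convex combination of distinct points of $S_1$ has modulus strictly less than $1$; hence $\Co(G) \cap S_1 = G$ and $\Co(G) \setminus S_1 \subseteq B_1$. Now split on whether $G$ contains a type 2 element. If some $e^{2\pi i \alpha} \in G$ is type 2, then $B_\alpha' \subseteq \HS(G)$ and in particular $B_1 \subseteq \HS(G)$, so $\HS(G) = B_1 \cup G$; this set can be written as the union of $B_\beta'$ over all type 2 elements $e^{2\pi i\beta} \in G$ together with $\Pi_n$ over all $n$ with $\mu_n \subseteq G$. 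If instead every element of $G$ is a root of unity, then for any finitely many $g_1, \dots, g_k \in G$ of orders $n_1, \dots, n_k$, the elements all lie in the single cyclic subgroup $\mu_N \subseteq G$ with $N = \mathrm{lcm}(n_i)$, using the standard fact that the subgroup of $S_1$ generated by $\mu_{n_1}, \dots, \mu_{n_k}$ equals $\mu_N$; hence any convex combination of the $g_i$ lies in $\Pi_N$, and $\Co(G) = \bigcup_n \Pi_n$ over $n$ with $\mu_n \subseteq G$.

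The main obstacle I expect is the bookkeeping in case (3): justifying the containment $\mu_N \subseteq G$ and cleanly expressing $B_1 \cup G$ as the claimed union of $B_\alpha$'s and $\Pi_n$'s. Both steps are soft facts about finite subgroups of $S_1$, but they need careful statements to avoid conflating the generating set $\Lambda$ with the ambient group $G$.
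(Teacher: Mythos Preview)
Your proof is correct and follows essentially the same strategy as the paper: lower bounds come from cyclic subgroups via Lemma \ref{SubgroupLemma}, and upper bounds come from $\HS(G)=\Co(G)$ sitting inside whatever convex region of $\CC$ contains $G$. Your case (3) splits on whether $G$ contains a type~2 element rather than on whether $\Lambda$ is finite with all type~1 entries (as the paper does), and your case (5) is argued more carefully than the paper's terse ``same reasoning as (4)''---in particular you handle the possibility $\Lambda=\{-1\}\cup(\text{type 3 elements})$ explicitly---but these are refinements of the same argument rather than a different route.
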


\begin{proof}
We again address each case individually. Notice that if $\Lambda$ does not satisfy hypothesis 1 or 2, then $\Lambda$ is contained in either $S_1$ or $\RR$. The three remaining hypotheses therefore handle each remaining case, so any collection $\Lambda$ of nonzero complex numbers satisfies exactly one of these conditions.
\begin{enumerate}
    \item This case is trivial by Lemma \ref{SubgroupLemma}.
    
    \item Let $\lambda \in \Lambda$ be non-real and of type 1 or 2, and let $\lambda' \in \Lambda$ be of type 2 or 3. Then $\lambda \lambda' \in G$ is a non-real and has modulus not equal to 1, so we are again finished by Lemma \ref{SubgroupLemma}.
    
    \item Notice that if $\Lambda \subseteq S_1$ then $G \subseteq S_1$ as well. If $\Lambda$ is finite and all elements of $\Lambda$ are of type 1, then let $n$ be the least common multiple of all $k \in \NN$ such that $\Lambda$ contains a primitive $k$th root of unity. Then $G$ is precisely the $n$th roots of unity, so $\HS(G) = \Pi_n$ in this case. Otherwise $\Lambda$ is either infinite or contains an element of type 2, so $G$ is a dense subset of $S_1$. Thus $B_1 \subseteq \HS(G)$, so in fact $B_1 \cup G = \HS(G)$. It follows that $\HS(G) = \bigcup_{\lambda \in G} \HS(\langle \lambda \rangle)$, finishing our proof in this case.
    
    \item By Lemma \ref{SubgroupLemma} we know that $\RR^+ \subseteq \HS(G)$. Further, we have $G \subseteq \RR^+$ and $\HS(\RR^+) = \RR^+$, so the other direction also holds.
    
    \item We use the same reasoning as in (4).
\end{enumerate}
This completes the proof.
\end{proof}

With these lemmas in mind, we now have a full solution to the abelian group case as follows:

\begin{theorem}\label{AbelianTheorem}
Let $G$ be an abelian matrix group, and consider a generating set $\{A_i\}$ for $G$. let $\lambda_{i,1}, ..., \lambda_{i,n}$ be the eigenvalues of $A_i$ for each $i$. Then
\begin{align*}
    \HS(G) = \bigcup_{j=1}^n \HS(\langle \{\lambda_{i,j}\}_i \rangle)
\end{align*}
In particular, $\HS(G)$ is a union of five types of sets: $\CC, \RR, \RR^+, \Pi_k, $ and $B_\alpha '$
\end{theorem}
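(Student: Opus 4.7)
The plan is to reduce the problem to the already-handled $1$-by-$1$ case by simultaneously triangularizing the abelian matrix group $G$. Since $G$ is abelian, its elements pairwise commute, and the standard result that any commuting family of operators on a finite-dimensional space has a common eigenvector---applied inductively on dimension---yields an invertible $S$ such that $SgS^{-1}$ is upper triangular for every $g \in G$. By Lemma \ref{IsomorphicRepresentationLemma}, this conjugation leaves $\HS(G)$ unchanged, so I would assume each $A_i$, and hence every element of $G$, is upper triangular. I would then order the eigenvalues $\lambda_{i,1}, \ldots, \lambda_{i,n}$ of $A_i$ to match the diagonal entries in this triangular form; this implicit compatible labeling across the generators is what makes the right-hand side of the claimed identity well defined.

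With this setup, for each $j \in \{1, \ldots, n\}$ the map $\pi_j \colon G \to \CC^*$ sending $g$ to its $j$-th diagonal entry is a group homomorphism, since diagonal entries of products of upper triangular matrices multiply coordinate-wise. Its image is precisely $\langle \{\lambda_{i,j}\}_i \rangle$. For any convex combination $C = \sum_k \alpha_k B_k$ with $B_k \in G$, the matrix $C$ is upper triangular with $j$-th diagonal entry $\sum_k \alpha_k \pi_j(B_k)$, and these diagonal entries are exactly the eigenvalues of $C$. Each such entry lies in $\Co(\pi_j(G)) = \HS(\pi_j(G))$, the last equality holding because $\pi_j(G) \subseteq GL_1(\CC)$ and for $1$-by-$1$ matrices the eigenvalue is the matrix itself. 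This gives the inclusion $\HS(G) \subseteq \bigcup_j \HS(\langle \{\lambda_{i,j}\}_i \rangle)$.

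For the reverse inclusion, any element of $\Co(\pi_j(G))$ has the form $\sum_k \alpha_k \pi_j(B_k)$ for some $B_k \in G$ and convex weights $\alpha_k$, and this is exactly the $j$-th diagonal entry (hence an eigenvalue) of $\sum_k \alpha_k B_k \in \Co(G)$. The ``in particular'' statement then follows immediately from the previous lemma's classification of $\HS$ of any subgroup of $\CC^*$ into sets of the five listed types, together with the fact that unions distribute over the outer index $j$.

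The only real obstacle is exercising care with the (possibly infinite) generating set: one must invoke that simultaneous triangularization applies to arbitrary commuting families, not merely finite ones, which follows from the observation that a common eigenspace can be extracted by intersecting the eigenspaces of one generator with the images under the others. Once that is in hand, the argument is a clean reduction through the homomorphisms $\pi_j$, using only that convex combinations commute with coordinate projections.
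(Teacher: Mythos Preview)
Your proof is correct and follows essentially the same approach as the paper: simultaneously upper-triangularize the commuting family, then read off eigenvalues of convex combinations as convex combinations of diagonal entries, reducing to the $1$-by-$1$ case handled by the preceding lemmas. Your version is in fact more carefully written than the paper's---you make explicit the homomorphisms $\pi_j$, verify both inclusions separately, and flag the infinite-generating-set subtlety---whereas the paper compresses all of this into a few lines.
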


\begin{proof}
Since $G$ is a commuting family of matrices, we may choose a basis of $\CC^n$ wherein each $A_i^k \in G$ (for $k \in \ZZ$) is upper-triangular (see e.g.\ \cite{MatrixAnalysis}). Let $\lambda_{i,1}^k, ..., \lambda_{i,n}^k$ be the diagonal elements of each $A_i^k$. Then the eigenvalues of $\sum_{i,k} \alpha_{i,k} A_i^k$ are $\sum_{i,k} \alpha_k \lambda_{i,j}^k$ for $j=1,...,n$. It follows that the hull spectrum $\HS(G)$ is determined entirely by the eigenvalues (ordered according to the chosen basis) of the generators $A_i$.
\end{proof}

\begin{corollary}
For a finite abelian matrix group $G$, we have \[\HS(G) = \bigcup_{k \in \mc{K}} \Pi_k\] where $\mc{K}$ is the set of all $k \in \NN$ such that some element $A \in G$ has a primitive $k$th root of unity as an eigenvalue.
\end{corollary}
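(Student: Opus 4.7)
The plan is to specialize Theorem \ref{AbelianTheorem} to the finite case, where it should immediately collapse to the claimed form. First I would observe that since $G$ is finite, every $A \in G$ satisfies $A^{\abs{G}} = I$, so each eigenvalue of every element of $G$ is a root of unity. Hence for any generating set $\{A_i\}$ of $G$ and each fixed position $j$, the one-dimensional group $\langle \{\lambda_{i,j}\}_i \rangle \leq \CC^\times$ is generated by roots of unity; being finitely generated and contained in the torsion subgroup of $\CC^\times$, it is a finite cyclic subgroup of $S_1$, say of order $m_j$. By case (1) of the lemma preceding Theorem \ref{AbelianTheorem}, $\HS(\langle \{\lambda_{i,j}\}_i \rangle) = \Pi_{m_j}$, so Theorem \ref{AbelianTheorem} yields
\[
\HS(G) \;=\; \bigcup_{j=1}^{n} \Pi_{m_j}.
\]

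It remains to identify the set $\{m_1, \dots, m_n\}$ with (a cofinal subset of) $\mc{K}$. For this I would invoke simultaneous diagonalizability of the commuting family $G$: after a basis change (which does not affect the hull spectrum, by Lemma \ref{IsomorphicRepresentationLemma}), every element of $G$ is diagonal, and its $j$th diagonal entry is the value of the corresponding word in the $\lambda_{i,j}$'s. Consequently the set of $j$th diagonal entries of elements of $G$ is exactly $\langle \{\lambda_{i,j}\}_i \rangle$, which contains a primitive $m_j$th root of unity. Therefore $m_j \in \mc{K}$ for each $j$, establishing $\HS(G) \subseteq \bigcup_{k \in \mc{K}} \Pi_k$. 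The reverse containment is immediate from Lemma \ref{SubgroupLemma}: if $k \in \mc{K}$, then some $A \in G$ has a primitive $k$th root of unity $\zeta$ as an eigenvalue, and applying Theorem \ref{AbelianTheorem} to the cyclic subgroup $\langle A \rangle$ gives $\Pi_k = \HS(\langle \zeta \rangle) \subseteq \HS(\langle A \rangle) \subseteq \HS(G)$.

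The only mild obstacle I anticipate is the bookkeeping between the two indexing sets: the $m_j$'s depend on the chosen generating set, whereas $\mc{K}$ is defined intrinsically. Since $\Pi_d \subseteq \Pi_k$ whenever $d \mid k$, divisors are absorbed into the union, and the containments in both directions above show that $\bigcup_j \Pi_{m_j}$ and $\bigcup_{k \in \mc{K}} \Pi_k$ must coincide. This resolves the indexing issue and completes the argument.
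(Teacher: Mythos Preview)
Your argument is correct and is essentially the approach the paper intends: the corollary is stated without proof, as an immediate specialization of Theorem~\ref{AbelianTheorem} to the case in which all eigenvalues are roots of unity, and you have simply written out the details of that specialization. One small remark: you upgrade the simultaneous triangularization used in the proof of Theorem~\ref{AbelianTheorem} to simultaneous diagonalization, which is legitimate here because each element of a finite matrix group is diagonalizable; this is what makes the identification of the $j$th diagonal entries with the full group $\langle \{\lambda_{i,j}\}_i\rangle$ go through cleanly and shows $m_j\in\mc{K}$.
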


We conclude this section with a brief analysis of faithful hull spectra of abstract finite abelian groups.

\begin{proposition}\label{CyclicProp}
Let $\GG = \langle g \rangle$ be a cyclic group of finite order $n$. Then the faithful hull spectra of $\GG$ are of the form \[\bigcup_{j \in J} \Pi_j\] for any collection $J$ of positive integers whose least common multiple is $n$.
\end{proposition}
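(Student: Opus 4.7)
The plan is to exploit the abelian structure of $\GG$ and reduce to the preceding corollary on finite abelian matrix groups. Since $\GG$ is cyclic, any finite-dimensional representation $\rho$ decomposes (by Lemma \ref{IrrepLemma} and the fact that irreducible representations of an abelian group are $1$-dimensional) as a direct sum $\rho = \bigoplus_{i=1}^d \chi_i$ of characters, so $\rho(g)$ is conjugate to a diagonal matrix with entries $\mu_1, \ldots, \mu_d$, each an $n$th root of unity. Letting $j_i = \operatorname{ord}(\mu_i)$ denote the multiplicative order of $\mu_i$, the order of $\rho(g)$ in $GL(V)$ equals $\operatorname{lcm}(j_1, \ldots, j_d)$, so $\rho$ is faithful if and only if this lcm equals $n$.

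For the forward direction, given a faithful representation $\rho$ of this form, I would compute $\HS(\rho(\GG))$ via the corollary for finite abelian groups. For each $m \in \ZZ$, the eigenvalues of $\rho(g^m)$ are the powers $\mu_1^m, \ldots, \mu_d^m$, and as $m$ varies $\mu_i^m$ ranges over the full cyclic group of order $j_i$ generated by $\mu_i$. The set $\mc{K}$ of orders of eigenvalues appearing in elements of $\rho(\GG)$ is therefore $\{k \in \NN : k \mid j_i \text{ for some } i\}$. Using the containment $\Pi_k \subseteq \Pi_j$ whenever $k \mid j$, the union $\bigcup_{k \in \mc{K}} \Pi_k$ collapses to $\bigcup_{i=1}^d \Pi_{j_i}$, and taking $J = \{j_1, \ldots, j_d\}$ gives the desired form, with $\operatorname{lcm}(J) = n$ guaranteed by faithfulness.

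For the converse, given any collection $J$ of positive integers with $\operatorname{lcm}(J) = n$, I would explicitly construct a faithful representation realizing $\bigcup_{j \in J} \Pi_j$: take $\rho = \bigoplus_{j \in J} \chi_j$ where $\chi_j : \GG \to \CC^\times$ sends $g \mapsto e^{2\pi i/j}$. Each $\chi_j(g)$ has order exactly $j$, so the order of $\rho(g)$ is $\operatorname{lcm}(J) = n$, confirming faithfulness, and the same eigenvalue-order computation as above yields $\HS(\rho(\GG)) = \bigcup_{j \in J} \Pi_j$. I do not anticipate any serious obstacle; the key observation is simply that the orders of eigenvalues arising across all powers of $\rho(g)$ are exactly the divisors of the orders $j_i$ of the eigenvalues of $\rho(g)$ itself, which reduces the problem to bookkeeping about $\{j_1, \ldots, j_d\}$ under divisibility.
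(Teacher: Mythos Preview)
Your proof is correct and follows essentially the same approach as the paper: decompose the representation into one-dimensional characters, compute each summand's hull spectrum as a polygon $\Pi_j$, and characterize faithfulness via the condition $\operatorname{lcm}(j_1,\ldots,j_d)=n$. The only cosmetic difference is that the paper parametrizes the irreducibles as $\rho_k(g)=\zeta^k$ and reads off $\HS(\rho_k(\GG))=\Pi_{n/\gcd(n,k)}$ directly, whereas you work with the eigenvalue orders $j_i$ and explicitly invoke the collapsing $\Pi_k \subseteq \Pi_j$ for $k \mid j$; the content is the same.
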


\begin{proof}
Note that the irreducible representations of $\GG$ are given by $\rho_k(g) = g^k$ for $k=0,1,...,n-1$, and $\HS(\rho_k(\GG)) = \Pi_{n/d}$ where $d=\gcd(n,k)$. Since a faithful representation of $\GG$ is a direct sum of some $\rho_k$ such that the $k$ have greatest common factor 1, it follows that the $n/d$ must have least common multiple $n$. Since any configuration of $\rho_k$ such that the $n/d$ have least common multiple $n$ is faithful, we are finished.
\end{proof}

\section{Bounds on the Hull Spectra of General Finite Groups}

Now, we consider $\HS(G)$ for general finite matrix groups $G$. A simple lower bound from our previous results is given by the following lemma.
\begin{lemma}\label{LowerBound}
For a finite matrix group $G$, we have $\bigcup_{k \in \mc{K}} \Pi_k \subseteq \HS(G)$, in which $\mc{K}$ is the set of all $k \in \NN$ such that some element $A \in G$ has a primitive $k$th root of unity as an eigenvalue. In particular, we have $\Pi_p \subseteq \HS(G)$ when $p$ is a prime that divides the order of $G$.
\end{lemma}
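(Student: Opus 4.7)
The proof is a clean application of the abelian theory already developed together with the subgroup lemma. The plan is to reduce to cyclic subgroups via Lemma \ref{SubgroupLemma}, then invoke the corollary to Theorem \ref{AbelianTheorem}.

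First I would fix $k \in \mc{K}$ and pick an element $A \in G$ that has a primitive $k$th root of unity $\zeta$ as an eigenvalue. The cyclic subgroup $\langle A \rangle$ is finite (since $G$ is finite) and abelian, so by the corollary to Theorem \ref{AbelianTheorem}, $\HS(\langle A \rangle) = \bigcup_{j \in \mc{K}_A} \Pi_j$, where $\mc{K}_A$ collects those $j$ such that some power of $A$ has a primitive $j$th root of unity as an eigenvalue. Since $A$ itself witnesses $k \in \mc{K}_A$, we get $\Pi_k \subseteq \HS(\langle A \rangle)$. Applying Lemma \ref{SubgroupLemma} gives $\Pi_k \subseteq \HS(G)$. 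Taking a union over $k \in \mc{K}$ yields the first inclusion.

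For the ``in particular'' part, suppose $p$ is a prime dividing $|G|$. By Cauchy's theorem, $G$ contains an element $A$ of order exactly $p$, so $A$ satisfies $x^p - 1 = 0$. Since $x^p - 1$ has distinct roots, $A$ is diagonalizable and its eigenvalues are $p$th roots of unity. If every eigenvalue of $A$ equaled $1$, then $A$ would be similar to the identity and hence equal to $I$, contradicting that $A$ has order $p > 1$. Therefore at least one eigenvalue of $A$ is a nontrivial $p$th root of unity, which is automatically primitive because $p$ is prime. Hence $p \in \mc{K}$, and the first part of the lemma gives $\Pi_p \subseteq \HS(G)$.

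The argument is essentially bookkeeping on top of the abelian classification; the only step that requires a nontrivial external input is the Cauchy's theorem invocation in the ``in particular'' statement, combined with the standard observation that a matrix of finite order is diagonalizable. No step seems genuinely delicate, and I do not anticipate a main obstacle.
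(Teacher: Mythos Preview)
Your proof is correct and follows essentially the same approach as the paper: reduce to the cyclic subgroup $\langle A \rangle$, apply the abelian classification (Theorem \ref{AbelianTheorem} or its corollary) to extract $\Pi_k$, and then use Lemma \ref{SubgroupLemma}. The paper leaves the ``in particular'' clause implicit, whereas you spell out the Cauchy's theorem argument; your version is more detailed but not different in substance.
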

\begin{proof}
For any $A \in G$ with a primitive $k$th root of unity as an eigenvalue, we know by Theorem \ref{AbelianTheorem} that $\Pi_k \subseteq \HS(\langle A \rangle) \subseteq \HS(G)$.
\end{proof}

\begin{definition}
In view of Lemmas \ref{IsomorphicRepresentationLemma} and \ref{IrrepLemma}, we see that for a finite group $\GG$, the hull spectrum of a $\GG$-representation is entirely determined by which irreducible summands are present in its direct sum decomposition into irreducibles. Recall the \emph{left regular representation} of $\GG$, given by $\reg : \GG \to GL(\Vreg)$ where $\Vreg$ is the vector space of all functions $\GG \to \CC$ and $(\reg(g)f)(g') = f(g^{-1}g')$ for $f \in \Vreg$ and $g,g' \in \GG$.

It is a basic fact in representation theory (see e.g.\ \cite{Lang}) that the regular representation contains every irreducible as a summand, so $\HS(\reg(\GG))$ is an upper bound on the hull spectrum of any $\GG$-representation. We will refer to $\HS(\reg(\GG))$ as the \emph{maximal hull spectrum} of $\GG$. Note that in view of Proposition \ref{CyclicProp}, the maximal hull spectrum of the cyclic group of order $n$ is $\Pi_n$.
\end{definition}

\begin{lemma}\label{LowerBoundRegularRep}
For a finite group $\GG$, we have $\bigcup_{k \in \mc{K}} \Pi_k \subseteq \HS(\reg(\GG))$, in which $\mc{K}$ is the set of all $k \in \NN$ such that some element $g \in \GG$ has order $k$.
\end{lemma}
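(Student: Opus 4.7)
The plan is to invoke Lemma \ref{LowerBound} directly, applied to the finite matrix group $\reg(\GG)$. For this, I need to exhibit, for each $k \in \mc{K}$, an element of $\reg(\GG)$ that has a primitive $k$th root of unity as an eigenvalue. The natural candidate is $\reg(g)$ itself, where $g \in \GG$ is any element of order $k$.

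The first step is to analyze the spectrum of $\reg(g)$ as a permutation matrix. Fix a basis of $\Vreg$ given by the indicator functions $\{\delta_{g'}\}_{g' \in \GG}$, so that $\reg(g)\delta_{g'} = \delta_{gg'}$. In this basis, $\reg(g)$ is the permutation matrix corresponding to the left-multiplication action of $\langle g \rangle$ on $\GG$. Because $g$ has order $k$, this action is free, so the orbits are all of size $k$ and there are exactly $|\GG|/k$ of them.

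The second step is to extract the eigenvalues. Each orbit contributes an invariant cyclic block of size $k$ to $\reg(g)$, and the spectrum of a cyclic permutation matrix of size $k$ consists of exactly the $k$th roots of unity. Hence the spectrum of $\reg(g)$ is the multiset of all $k$th roots of unity, each with multiplicity $|\GG|/k$. In particular, $e^{2\pi i/k}$ is an eigenvalue of $\reg(g)$, which is a primitive $k$th root of unity.

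The final step is to assemble the inclusion. Lemma \ref{LowerBound}, applied to the finite matrix group $\reg(\GG)$, now yields $\Pi_k \subseteq \HS(\reg(\GG))$ for every $k \in \mc{K}$, and taking the union over all such $k$ gives the desired containment. No obstacle arises here: the argument is essentially a direct consequence of the structure of the regular representation combined with Lemma \ref{LowerBound}, and the only mildly nontrivial point is the (standard) observation that $\reg(g)$ decomposes into disjoint $k$-cycles whenever $g$ has order $k$.
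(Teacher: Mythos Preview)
Your proof is correct and follows essentially the same approach as the paper: both reduce via Lemma~\ref{LowerBound} to showing that $\reg(g)$ has $e^{2\pi i/k}$ as an eigenvalue when $g$ has order $k$, and both obtain this from the fact that left multiplication by $g$ decomposes $\GG$ into orbits of size $k$. The only cosmetic difference is that the paper writes down an explicit eigenvector on $\Vreg$, whereas you invoke the known spectrum of a $k$-cycle permutation matrix on each orbit block.
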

\begin{proof}
Suppose $g \in \GG$ has order $k$. By Lemma \ref{LowerBound}, it suffices to show that $\reg(g)$ has a primitive $k$th root of unity as an eigenvalue. Observe that each element of $\GG$ has an orbit of size $k$ under the action by $g$. Define $f \in \Vreg$ by mapping an arbitrary element of each orbit to $1$, and then subsequently mapping each other element to a $k$th root of unity so that $\reg(g)f = e^{2\pi i/k} f$. Then $e^{2\pi i/k}$ is an eigenvalue of $\reg(g)$, so we are finished.
\end{proof}

We know from Theorem \ref{AbelianTheorem} that finite abelian groups have hull spectra that are exactly unions of some polygons $\Pi_k$. In particular, the lower bounds given in Lemmas \ref{LowerBound} and \ref{LowerBoundRegularRep} are actually tight for finite abelian groups. We will later show that the lower bound is tight for certain other finite groups as well. However, the inclusions given by these lemmas are not tight in other cases (such as $\GG = S_5$ and many alternating groups $A_n$). Interestingly, they are known to be tight for $S_n$ with $n \leq 4$ and may be tight for many $S_n$ with $n > 5$. For these reasons, it will also be useful to determine some upper bounds on the hull spectra of groups.

To obtain an upper bound on $\HS(G)$ for a matrix group $G$, we use properties of the field of values of a matrix. For a matrix $A \in M_n$, the \textit{field of values} of $A$ is defined as
\[F(A) = \{x^* A x \mid x \in \CC^n, x^* x = 1\} \]
We make use of several fundamental properties of the field of values, as can be found in \cite{Topics}. For sets $X$ and $Y$ we denote their sum $X+Y = \{x + y \mid x \in X,\; y \in Y\}$.
\begin{lemma}\label{FieldOfValuesLemma}
For matrices $A, B \in M_n$,
\begin{itemize}
    \item $\sigma(A) \subseteq F(A)$
    \item $F(A+B) \subseteq F(A) + F(B)$
    \item $F(\alpha A) = \alpha F(A)$ for $\alpha \in \CC$
    \item If $A$ is normal, then $F(A) = \Co(\sigma(A))$
\end{itemize}
\end{lemma}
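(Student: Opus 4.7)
The plan is to verify each of the four bullet points directly from the definition $F(A) = \{x^*Ax : x\in\CC^n,\, x^*x=1\}$, with three of the items reducing to one-line identities and only the normal case requiring substantive work.

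For item~1, I would take $\lambda \in \sigma(A)$ with a unit eigenvector $x$; then $x^*Ax = \lambda x^*x = \lambda$, so $\lambda\in F(A)$. For item~2, for any unit vector $x$ the identity $x^*(A+B)x = x^*Ax + x^*Bx$ exhibits the left side as an element of $F(A)+F(B)$ by definition of the Minkowski sum. For item~3, the identity $x^*(\alpha A)x = \alpha(x^*Ax)$ gives both inclusions immediately by running $x$ over the unit sphere on each side.

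The substantive step is item~4. Using the spectral theorem for normal matrices, I would write $A = U^* D U$ with $U$ unitary and $D = \mrm{diag}(\lambda_1,\ldots,\lambda_n)$ where the $\lambda_i$ are the eigenvalues of $A$. Setting $y = Ux$, which bijectively preserves the unit sphere, a direct substitution shows $F(A) = F(D)$. For the diagonal matrix $D$, a unit vector $y$ satisfies $y^* D y = \sum_i |y_i|^2 \lambda_i$ with $\sum_i |y_i|^2 = 1$, so $F(D)\subseteq\Co(\sigma(A))$. The reverse inclusion is obtained by realizing any convex combination $\sum_i c_i \lambda_i$ via $y_i = \sqrt{c_i}$, since $(|y_i|^2)_i$ ranges over all length-$n$ probability vectors as $y$ varies over the unit sphere.

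I do not anticipate any substantial obstacle: each of the four claims is classical and can be located in Horn--Johnson \cite{Topics}. The only nontrivial ingredient is invoking the spectral theorem for normal matrices together with the observation that $F$ is invariant under unitary similarity, which itself reduces to the one-line substitution $y = Ux$.
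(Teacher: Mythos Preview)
Your proposal is correct; each of the four verifications is the standard one and goes through without difficulty. Note, however, that the paper does not actually prove this lemma: it is stated as a collection of fundamental properties of the field of values with a reference to \cite{Topics}, and no argument is given in the paper itself. So there is nothing to compare against beyond observing that your direct proofs are exactly the textbook arguments one finds in that reference.
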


These properties immediately give an upper bound on $\HS(G)$.

\begin{lemma}\label{UpperBoundLemma}
For any finite matrix group $G$,
\[\HS(G) \subseteq \Co\left(\; \bigcup_{A \in G} \Co\big(\sigma(A)\big)\right)\]
\end{lemma}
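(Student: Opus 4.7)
The plan is to combine the field-of-values properties collected in Lemma \ref{FieldOfValuesLemma} with the observation (used already in Lemma \ref{UnitBallLemma}) that we may assume every element of $G$ is unitary. After this normalization, each $A \in G$ is normal, so the fourth bullet of Lemma \ref{FieldOfValuesLemma} gives $F(A) = \Co(\sigma(A))$, which is the exact shape on the right-hand side of the desired bound.

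First, I would invoke Lemma \ref{IsomorphicRepresentationLemma} to replace $G$ by a conjugate unitary matrix group $G' = S G S^{-1}$ without changing $\HS$. Observe that $\Co(\sigma(SAS^{-1})) = \Co(\sigma(A))$, so the right-hand side of the claimed bound is also unchanged, and it suffices to prove the inclusion for the unitary group $G'$. Henceforth every element of $G$ is assumed unitary, hence normal.

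Next, take any $C \in \Co(G)$, written as $C = \sum_{k=1}^m \alpha_k A_k$ with $A_k \in G$, $\alpha_k \ge 0$, and $\sum_k \alpha_k = 1$. Applying the first three bullets of Lemma \ref{FieldOfValuesLemma} repeatedly,
\[
\sigma(C) \subseteq F(C) = F\left(\sum_{k=1}^m \alpha_k A_k\right) \subseteq \sum_{k=1}^m F(\alpha_k A_k) = \sum_{k=1}^m \alpha_k F(A_k).
\]
Since each $A_k$ is normal, the fourth bullet converts this to
\[
\sigma(C) \subseteq \sum_{k=1}^m \alpha_k \Co(\sigma(A_k)).
\]

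Finally, I would verify the elementary fact that a convex combination of convex sets drawn from a family $\mathcal{F}$ lies in $\Co\bigl(\bigcup \mathcal{F}\bigr)$: any element $\lambda = \sum_k \alpha_k x_k$ with $x_k \in \Co(\sigma(A_k)) \subseteq \bigcup_{A \in G} \Co(\sigma(A))$ is itself a convex combination of points in $\bigcup_{A \in G} \Co(\sigma(A))$, so $\lambda \in \Co\bigl(\bigcup_{A \in G} \Co(\sigma(A))\bigr)$. Applying this to any $\lambda \in \sigma(C)$ finishes the proof, since $C \in \Co(G)$ was arbitrary and $\HS(G)$ is the union of all such $\sigma(C)$.

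The only mild obstacle is the initial reduction to the unitary case, because the right-hand side is not manifestly conjugation-invariant; but since similarity preserves spectra, $\Co(\sigma(A))$ is unchanged under conjugation of $A$, so this reduction is valid. Everything else is a direct bookkeeping application of Lemma \ref{FieldOfValuesLemma}.
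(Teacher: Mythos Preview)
Your proof is correct and follows essentially the same route as the paper: reduce to the unitary case, then chain the field-of-values properties $\sigma(C)\subseteq F(C)\subseteq\sum_k\alpha_kF(A_k)=\sum_k\alpha_k\Co(\sigma(A_k))\subseteq\Co\bigl(\bigcup_A\Co(\sigma(A))\bigr)$. You are slightly more explicit than the paper in justifying that the right-hand side is conjugation-invariant and in spelling out the final inclusion, but the argument is the same.
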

\begin{proof}
We can assume all matrices in $G$ are unitary and thus also normal. Due to Lemma \ref{FieldOfValuesLemma}, for $\alpha_k \geq 0$ with $\sum_k \alpha_k = 1$, we have
\begin{align*}
\sigma \left( \sum_k \alpha_k A_k \right) & \subseteq F \left( \sum_k \alpha_k A_k \right) \\
        & \subseteq \sum_k F(\alpha_k A_k) \\
        & = \sum_k \alpha_k F(A_k) \\
        & = \sum_k \alpha_k \Co \big( \sigma(A_k) \big) \\
        & \subseteq \Co \left( \bigcup_k \Co \big( \sigma(A_k) \big) \right),
\end{align*}
so the claim holds.
\end{proof}

Together with the previously-derived lower bound on $\HS(G)$, this upper bound yields the following results:

\begin{corollary}\label{TotalBoundsCorollary}
For a finite matrix group $G$, we have \[ \bigcup_{k \in \mc{K}} \Pi_k \subseteq \HS(G) \subseteq \Co \left( \bigcup_{k \in \mc{K}} \Pi_k \right) \] in which $\mc{K}$ is the set of $k$ such that some $A \in G$ has a primitive $k$th root of unity as an eigenvalue.
\end{corollary}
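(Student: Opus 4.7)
The plan is to assemble the two inclusions from the preceding lemmas. The lower inclusion $\bigcup_{k \in \mc{K}} \Pi_k \subseteq \HS(G)$ is exactly the statement of Lemma \ref{LowerBound}, so there is nothing further to do there.

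For the upper inclusion, I would start from Lemma \ref{UpperBoundLemma}, which already gives
\[ \HS(G) \subseteq \Co\left( \bigcup_{A \in G} \Co\bigl(\sigma(A)\bigr)\right). \]
It then suffices to show that $\bigcup_{A \in G} \Co(\sigma(A)) \subseteq \Co\bigl(\bigcup_{k \in \mc{K}} \Pi_k\bigr)$, since applying $\Co(\cdot)$ to both sides and using idempotence of the convex hull operation finishes the argument. This containment is the key (and essentially only) step: for each $A \in G$, since $G$ is finite, $A$ has finite order, so every eigenvalue $\lambda \in \sigma(A)$ is a root of unity, say a primitive $k$th root of unity for some $k$. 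By the definition of $\mc{K}$, that $k$ lies in $\mc{K}$, and $\lambda \in \Pi_k$ since $\Pi_k$ contains all $k$th roots of unity. Hence $\sigma(A) \subseteq \bigcup_{k \in \mc{K}} \Pi_k$, and taking the convex hull gives $\Co(\sigma(A)) \subseteq \Co\bigl(\bigcup_{k \in \mc{K}} \Pi_k\bigr)$ for each $A \in G$.

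There is no real obstacle here; the corollary is essentially a bookkeeping statement that packages Lemmas \ref{LowerBound} and \ref{UpperBoundLemma} into a single two-sided bound in terms of the same index set $\mc{K}$. The only thing one must be careful about is that the set $\mc{K}$ used in both lemmas is the same, which is immediate from the fact that $\mc{K}$ is defined identically in both statements (as the set of $k$ for which some element of $G$ has a primitive $k$th root of unity as an eigenvalue).
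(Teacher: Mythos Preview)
Your proposal is correct and matches the paper's approach: the corollary is stated without explicit proof, as an immediate consequence of Lemmas \ref{LowerBound} and \ref{UpperBoundLemma}, and the only bridging step needed is exactly the one you supply, namely that every eigenvalue of every $A\in G$ is a primitive $k$th root of unity for some $k\in\mc{K}$, so $\sigma(A)\subseteq\bigcup_{k\in\mc{K}}\Pi_k$.
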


\begin{corollary}\label{TotalRegRepBoundsCorollary}
For a finite group $\GG$, we have \[ \bigcup_{k \in \mc{K}} \Pi_k \subseteq \HS(\reg(\GG)) \subseteq \Co \left( \bigcup_{k \in \mc{K}} \Pi_k \right) \] in which $\mc{K}$ is the set of $k$ such that some $g \in \GG$ has order $k$.
\end{corollary}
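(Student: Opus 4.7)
The plan is to derive this corollary by combining two prior results: the lower bound $\bigcup_{k \in \mc{K}} \Pi_k \subseteq \HS(\reg(\GG))$ is already exactly the content of Lemma \ref{LowerBoundRegularRep}, so I need only establish the upper bound. For that, I will apply Lemma \ref{UpperBoundLemma} to the matrix group $\reg(\GG)$ after first identifying the spectra of the individual operators $\reg(g)$.

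The key observation is as follows. For any $g \in \GG$ of order $m$, the left multiplication action of $\langle g \rangle$ on $\GG$ has every orbit of size exactly $m$, so with respect to the standard basis of $\Vreg$ indexed by group elements, $\reg(g)$ is a permutation matrix whose cycle structure consists of $\abs{\GG}/m$ disjoint $m$-cycles. Its spectrum is therefore contained in the set of $m$th roots of unity, which gives $\Co(\sigma(\reg(g))) \subseteq \Pi_m$. Since $m \in \mc{K}$ by hypothesis, this yields the inclusion
\[\bigcup_{g \in \GG} \Co(\sigma(\reg(g))) \subseteq \bigcup_{k \in \mc{K}} \Pi_k.\]

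Applying Lemma \ref{UpperBoundLemma} to the finite matrix group $\reg(\GG)$ then gives
\[\HS(\reg(\GG)) \subseteq \Co\left(\bigcup_{g \in \GG} \Co(\sigma(\reg(g)))\right) \subseteq \Co\left(\bigcup_{k \in \mc{K}} \Pi_k\right),\]
which completes the upper bound. Since each step is a direct invocation of an earlier lemma, there is no serious obstacle here; the only minor subtlety is noting that the $\mc{K}$ appearing in Corollary \ref{TotalBoundsCorollary} applied to $\reg(\GG)$ coincides with the $\mc{K}$ in the statement of this corollary, which follows from the cycle analysis above combined with the fact that every divisor of $\mrm{ord}(g)$ is itself realized as the order of a power of $g$.
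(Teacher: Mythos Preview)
Your proof is correct and matches the paper's intended derivation: the paper states this result as an immediate corollary (with no written proof) of Corollary~\ref{TotalBoundsCorollary}, and your cycle-structure computation of $\sigma(\reg(g))$ is exactly what is needed to identify the two versions of $\mc{K}$ and make that deduction go through. The only difference is cosmetic---you invoke Lemma~\ref{UpperBoundLemma} directly rather than citing Corollary~\ref{TotalBoundsCorollary}, but as you yourself note at the end, these amount to the same thing.
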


Recall the doubly stochastic matrix
\[C_t = \begin{bmatrix}
0 & 0 & 0 & 1 & 0 \\
0 & 0 & t & 0 & 1-t \\
0 & t & 1-t & 0 & 0 \\
0 & 1-t & 0 & 0 & t \\
1 & 0 & 0 & 0 & 0
\end{bmatrix},\]
which for $t=\frac{1}{2}$ serves as the example in \cite{MR} that shows that $DS_5 \neq \bigcup_{k \leq 5} \Pi_k$. We remark that the eigenvalues realized by $C_t$ for any $t \in [0,1]$ all lie within $\bigcup_{k \leq 6} \Pi_k$, the lower bound given in Corollary \ref{TotalRegRepBoundsCorollary} (since $S_5$ has elements of orders $1$ through $6$).  As we have seen in section \ref{AbelianSection}, $\HS(\reg(\GG))$ achieves the lower bound when $\GG$ is abelian. The following two sections prove that this is also the case for the dihedral groups and the quaternion group. However, in Section \ref{sec:7} we present an example in $A_4$ for which the lower bound in Corollary \ref{TotalRegRepBoundsCorollary} is not achieved.

\section{Dihedral Groups}

In this section we consider the dihedral group \[D_{2n} = \langle r,s \mid r^n = s^2 = (sr)^2 = 1 \rangle\] of symmetries of the $n$-gon. The following two observations will classify the irreducible representations of $D_{2n}$ (see \cite{DihedralIrreps}) so that we may analyze the resulting hull spectra.

\begin{observation}\label{DihedralOddObservation}
We note that if $n$ is odd, there are $\frac{n+3}{2}$ irreducible representations of $D_{2n}$. Two of these representations are one-dimensional, so their images are abelian groups of complex numbers. In particular, the only two groups realized are the trivial group $\{1\}$ and $\ZZ/2\ZZ = \{-1, 1\}$.

The remaining $\frac{n-1}{2}$ irreducibles are of the form $\rho_k : D_{2n} \to GL(\CC^2)$ in which 
\begin{align*}
    \rho_k(r) = \begin{bmatrix} \cos k\theta_n & -\sin k\theta_n \\ \sin k\theta_n & \cos k\theta_n \end{bmatrix}
    \quad \quad \quad
    \rho_k(s) = \begin{bmatrix} 0 & 1 \\ 1 & 0 \end{bmatrix}
\end{align*}
for $\theta_n = \frac{2\pi}{n}$ and $k=1,...,\frac{n-1}{2}$.
\end{observation}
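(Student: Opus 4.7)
The plan is to verify the observation through the standard correspondence between irreducible complex representations of a finite group and its conjugacy classes, paired with an explicit construction of each representation type.

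First, I would count conjugacy classes in $D_{2n}$ for odd $n$. The identity forms a class by itself. Each nontrivial rotation $r^k$ is conjugated only to $r^{-k}$ (via $s$), and since $n$ is odd one has $r^k \neq r^{-k}$ for every such $k$, yielding $(n-1)/2$ rotation classes. For reflections, the computation $r \cdot sr^k \cdot r^{-1} = sr^{k-2}$ combined with the fact that $-2$ generates $\ZZ/n\ZZ$ when $\gcd(2,n) = 1$ shows that all $n$ reflections form a single conjugacy class. Summing, there are $1 + (n-1)/2 + 1 = (n+3)/2$ conjugacy classes, which matches the target count of irreducibles.

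Next, I would identify the one-dimensional representations via the abelianization. A direct computation gives $[s,r] = srs^{-1}r^{-1} = r^{-2}$, which generates $\langle r \rangle$ for odd $n$. Hence the commutator subgroup equals $\langle r \rangle$ and the abelianization is $\ZZ/2\ZZ$, so there are exactly two one-dimensional representations: the trivial representation (image $\{1\}$) and the sign representation $r \mapsto 1$, $s \mapsto -1$ (image $\{-1,1\}$), as claimed.

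Finally, I would verify that the $\rho_k$ are well-defined, irreducible, and pairwise non-isomorphic. Substituting the given matrices into $r^n = s^2 = (sr)^2 = 1$ is a direct calculation. For irreducibility, $\rho_k(r)$ is a rotation by $k\theta_n \in (0,\pi)$, hence diagonalizable over $\CC$ with eigenvectors $(1,-i)^T$ and $(1,i)^T$; one checks that $\rho_k(s)$ swaps these eigenvectors, so no proper invariant subspace exists. Distinctness of the $\rho_k$ follows from the characters $\chi_{\rho_k}(r) = 2\cos(k\theta_n)$, which are strictly decreasing in $k$ for $k = 1,\ldots,(n-1)/2$. A final dimension tally, $2 \cdot 1^2 + \frac{n-1}{2} \cdot 2^2 = 2n = |D_{2n}|$, confirms that no irreducibles are missed. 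The key subtlety is that odd $n$ is used in two essential places: to collapse all reflections into one conjugacy class, and to ensure the commutator subgroup is all of $\langle r \rangle$; without the odd-parity hypothesis both the count of representations and their dimensions would change.
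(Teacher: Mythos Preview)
Your argument is correct and complete: the conjugacy-class count, the abelianization computation, the verification that the $\rho_k$ are well-defined irreducible pairwise-distinct representations, and the dimension check all go through exactly as you describe.

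Note, however, that the paper does not actually prove this observation. It is stated as a known fact with a citation to \cite{DihedralIrreps}, so there is no ``paper's own proof'' to compare against. You have supplied a full self-contained argument where the paper simply appeals to the literature; in that sense your proposal goes beyond what is required. If anything, the only stylistic adjustment would be to match the paper's treatment and simply cite the classification, but as a standalone proof your write-up is sound.
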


\begin{observation}\label{DihedralEvenObservation}
If $n$ is even, there are $\frac{n+6}{2}$ irreducible representations of $D_{2m}$, four of which are one-dimensional with images $\{1\}$ or $\{-1, 1\}$. As above, the rest are of the form $\rho_k : D_{2n} \to GL_2(\CC)$ in which 
\begin{align*}
    \rho_k(r) = \begin{bmatrix} \cos k\theta_n & -\sin k\theta_n \\ \sin k\theta_n & \cos k\theta_n \end{bmatrix}
    \quad \quad \quad
    \rho_k(s) = \begin{bmatrix} 0 & 1 \\ 1 & 0 \end{bmatrix}
\end{align*}
for $\theta_n = \frac{2\pi}{n}$ and $k=1,...,\frac{n-2}{2}$.
\end{observation}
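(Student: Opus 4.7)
The plan is to verify this standard classification by (a) counting the conjugacy classes of $D_{2n}$, (b) identifying the one-dimensional representations via the abelianization, (c) writing down the two-dimensional representations explicitly, and (d) confirming by a dimension count that no irreducibles have been missed. I expect the analysis to parallel the odd case of Observation \ref{DihedralOddObservation}, with the key distinction coming from how parity of $n$ affects the conjugacy class structure and the abelianization.

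First I would compute the conjugacy classes. The rotations $\{1\}$ and $\{r^{n/2}\}$ are central (using that $n$ is even), while the remaining rotations pair up as $\{r^k, r^{-k}\}$ for $k = 1,\ldots, \frac{n}{2}-1$, contributing $\frac{n-2}{2}$ classes. The reflections $sr^i$ split according to the parity of $i$ (again because $n$ is even, $r$-conjugation moves the index by $2$), yielding two additional classes. Adding up gives $1 + 1 + \tfrac{n-2}{2} + 2 = \tfrac{n+6}{2}$, which by the standard theorem equals the number of irreducible representations.

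Next I would determine the one-dimensional irreducibles. A direct commutator computation shows $[r,s] = r^2$, so $[D_{2n}, D_{2n}] = \langle r^2 \rangle$. When $n$ is even, this subgroup has order $n/2$ and thus index $4$, so the abelianization is isomorphic to $\ZZ/2\ZZ \times \ZZ/2\ZZ$. This yields exactly four one-dimensional representations, each sending $r$ and $s$ independently to $\pm 1$; their images lie in $\{1\}$ or $\{-1,1\}$ as claimed. Then I would exhibit the $\rho_k$ given in the statement, check that the defining relations $r^n = s^2 = (sr)^2 = 1$ are satisfied by the matrices (a routine calculation using the rotation and reflection structure), and verify irreducibility by observing that $\rho_k(r)$ is a nontrivial rotation (so has no real eigenvector) while $\rho_k(s)$ swaps the standard basis vectors, ruling out any common invariant one-dimensional subspace.

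Finally, I would show that $\rho_k$ and $\rho_{k'}$ are non-isomorphic for $1 \leq k < k' \leq \frac{n-2}{2}$ by comparing characters: $\chi_{\rho_k}(r) = 2\cos(k\theta_n)$ takes distinct values in this range since $k\theta_n \in (0,\pi)$. Together with the four linear characters, this gives $4 + \frac{n-2}{2} = \frac{n+6}{2}$ pairwise non-isomorphic irreducibles, matching the conjugacy class count. As a consistency check, the sum of squared dimensions is $4 \cdot 1 + \tfrac{n-2}{2} \cdot 4 = 2n = \abs{D_{2n}}$, so no irreducibles are missing. The main obstacle is bookkeeping around the parity of $n$, specifically ensuring that the range $k = 1,\ldots,\frac{n-2}{2}$ avoids the indices $k = 0$ and $k = n/2$ for which $\rho_k$ degenerates into a direct sum of one-dimensional representations already accounted for, so that the enumeration is neither redundant nor incomplete.
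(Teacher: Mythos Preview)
Your argument is correct and is the standard derivation of the irreducible representations of $D_{2n}$ for even $n$. The paper, however, does not prove this observation at all: it simply records the classification and cites an external reference (\cite{DihedralIrreps}) for justification. So your proposal is strictly more detailed than what the paper offers, and there is nothing to compare against beyond noting that you have supplied the omitted proof.
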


\begin{theorem}\label{DihedralIrrepTheorem}
With $\rho_k$ defined as above, \[\HS(\rho_k(D_{2n})) = \Pi_a \cup \Pi_2\] for $a = \frac{n}{\gcd(n,k)}$.
\end{theorem}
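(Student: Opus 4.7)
The plan is to establish the two inclusions $\Pi_a \cup \Pi_2 \subseteq \HS(\rho_k(D_{2n}))$ and $\HS(\rho_k(D_{2n})) \subseteq \Pi_a \cup \Pi_2$ separately, where $a = n/\gcd(n,k)$.

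For the lower bound, I would invoke Lemma \ref{LowerBound} twice. The rotation $\rho_k(r^{n/a})$ has order $a$ and is a planar rotation by $2\pi/a$, so it has a primitive $a$-th root of unity as an eigenvalue, giving $\Pi_a \subseteq \HS(\rho_k(D_{2n}))$. The reflection $\rho_k(s)$ has $-1$ as an eigenvalue, a primitive 2nd root of unity, giving $\Pi_2 = [-1, 1] \subseteq \HS(\rho_k(D_{2n}))$.

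For the upper bound, I would simultaneously diagonalize the cyclic group of rotations by changing to the basis $u_\pm = \tfrac{1}{\sqrt{2}}(1, \mp i)^T$. In this basis each $\rho_k(r^j)$ becomes $\mathrm{diag}(e^{ijk\theta_n}, e^{-ijk\theta_n})$, while a direct computation gives $\rho_k(s) = \bigl[\begin{smallmatrix}0 & i \\ -i & 0\end{smallmatrix}\bigr]$; hence every reflection $\rho_k(sr^j) = \rho_k(s)\rho_k(r^j)$ is antidiagonal. Splitting a convex combination $C \in \Co(\rho_k(D_{2n}))$ into its rotation and reflection portions, $C = pR + qF$ with $p,q \geq 0$ and $p+q=1$, the matrix $C$ takes the form
\[ C' = \begin{bmatrix} pz & iqu \\ -iq\bar u & p\bar z \end{bmatrix} \]
in the new basis, where both $z$ and $u$ are convex combinations of $a$-th roots of unity (since the eigenvalues of rotations cycle through all $a$-th roots of unity as $j$ varies), so $z, u \in \Pi_a$. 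The characteristic polynomial $\lambda^2 - 2p\,\mathrm{Re}(z)\lambda + p^2|z|^2 - q^2|u|^2 = 0$ yields the eigenvalues
\[ \lambda = p\,\mathrm{Re}(z) \pm \sqrt{\,q^2|u|^2 - p^2\,\mathrm{Im}(z)^2\,}. \]

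The proof concludes by a case split on the sign of the radicand. When $q^2|u|^2 \geq p^2\,\mathrm{Im}(z)^2$, the eigenvalues are real and satisfy $|\lambda| \leq p|\mathrm{Re}(z)| + q|u| \leq p + q = 1$, so $\lambda \in [-1,1] = \Pi_2$. When the radicand is negative, the eigenvalues are complex conjugates whose imaginary part has magnitude at most $p|\mathrm{Im}(z)|$, so both eigenvalues lie on the vertical segment joining $pz$ and $p\bar z$. The main obstacle is then confirming that this segment sits inside $\Pi_a$: this uses both the conjugation symmetry of $\Pi_a$ (so $p\bar z \in p\Pi_a$ as well as $pz$) and the fact that $0 \in \Pi_a$ (so $p\Pi_a \subseteq \Pi_a$), after which convexity of $\Pi_a$ traps the complex eigenvalues inside. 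Combining the two cases yields $\lambda \in \Pi_a \cup \Pi_2$ in every instance, completing the argument.
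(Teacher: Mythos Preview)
Your proof is correct and reaches the same destination as the paper, but via a different technical route. The paper stays in the real basis and, after reducing to $k=1$, reads off the real part of a non-real eigenvalue from $\tfrac{1}{2}\Tr(C)$ and bounds the imaginary part using the Bendixson--Hirsch inequality $\lambda_{\min}(B) \leq \mrm{Im}(\lambda) \leq \lambda_{\max}(B)$ with $B = \tfrac{1}{2i}(C-C^*)$, citing \cite{BoundsForEigenvalues}. Your diagonalization of the rotation subgroup instead produces the eigenvalues exactly from the quadratic formula, so the external citation is replaced by a self-contained computation. Both arguments ultimately establish that a non-real eigenvalue has real part $p\,\mrm{Re}(z)$ and imaginary part of magnitude at most $p\,|\mrm{Im}(z)|$ for some $z \in \Pi_a$, then finish with convexity and conjugation-symmetry of $\Pi_a$; the difference lies solely in how that bound is obtained. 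Your version has the small bonus of handling general $k$ directly rather than first reducing to $k=1$.

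One minor slip: $\rho_k(r^{n/a}) = \rho_k(r^{\gcd(n,k)})$ is a rotation by the angle $2\pi k'/a$ with $k' = k/\gcd(n,k)$ coprime to $a$, not by $2\pi/a$ as you wrote. This does not affect the argument, since its eigenvalue $e^{2\pi i k'/a}$ is still a primitive $a$-th root of unity and Lemma~\ref{LowerBound} applies just the same.
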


\begin{proof}
Since $\rho_k(D_{2n}) = \rho_{\frac{k}{\gcd(n,k)}}(D_{2a})$, we may reduce to the case in which $n=a$ and $k = \frac{k}{\gcd(n,k)}$, so $k$ is prime to $n$. Under this assumption, we see that the elements of $\rho_k(D_{2n})$ are exactly the same as those in $\rho_1(D_{2n})$. Thus, we may assume without loss of generality that $k=1$. We now set $G = \rho_1(D_{2n})$ and aim to prove $\HS(G) = \Pi_n \cup \Pi_2$.

The inclusion $\HS(G) \supseteq \Pi_n \cup \Pi_2$ is trivial by Lemma \ref{SubgroupLemma}, as $\HS(\langle r \rangle) = \Pi_n$ and $\HS(\langle s \rangle) = \Pi_2$. We now claim that the spectrum of every element of $\Co(G)$ is contained within $\Pi_n \cup \Pi_2$. Let $C \in \Co(G)$, so it is of the following form:
\begin{align*}
		C & = \alpha_0 I +  \alpha_1 r + \ldots + \alpha_{n-1} r^{n-1} + \beta_0 s + \beta_1 rs + \ldots + \beta_{n-1}r^{n-1}s\\ 
& = \begin{bmatrix}
				\sum_{k=0}^{n-1} \left( \alpha_k \cos(k\theta_n) - \beta_k \sin(k \theta_n) \right) & \beta_0 + \sum_{k=0}^{n-1} \left( -\alpha_k \sin(k \theta_n) + \beta_k \cos(k \theta_n) \right) \\
				\beta_0 + \sum_{k=0}^{n-1} \left( \alpha_k \sin(k \theta_n) + \beta_k \cos(k \theta_n) \right) & \sum_{k=0}^{n-1} \left( \alpha_k \cos(k \theta_n) + \beta_k \sin(k \theta_n) \right)
\end{bmatrix}
\end{align*}
where $\alpha_k \geq 0$, $\beta_k \geq 0$, $\sum_k ( \alpha_k + \beta_k ) = 1$ and $\theta_n = \frac{2\pi}{n}$. Note that by Lemma \ref{UnitBallLemma}, we know that $\HS(G) \subseteq \ol{B}_1$. Then if $C$ has real eigenvalues, it follows that $\sigma(C) \subseteq \Pi_2$.

Otherwise $C$ has non-real eigenvalues $\lambda, \ol{\lambda}$ for some $\lambda \in \CC \backslash \RR$. Then we see that
\begin{align*}
		\mrm{Re}(\lambda) = \frac{1}{2}\Tr(C) & = \sum_{k=0}^{n-1}\alpha_k\cos(k \theta_n)
\end{align*}
Let $B = \frac{1}{2i}(C - C^*)$. It is well-known (see \cite{BoundsForEigenvalues}) that $\lambda_{min}(B) \leq \mrm{Im}(\lambda) \leq \lambda_{max}(B)$. Also,
\[B = \begin{bmatrix}
				0 & \frac{-1}{i}\sum_{k=0}^{n-1} \alpha_k \sin(k \theta_n)\\
				\frac{1}{i}\sum_{k=0}^{n-1} \alpha_k \sin(k \theta_n) & 0
\end{bmatrix}\]
has eigenvalues $\pm \sum_{k=0}^{n-1} \alpha_k \sin(k \theta_n)$. Thus, the eigenvalues of $C$ have real part $\sum_{k}\alpha_k \cos(k \theta_n)$ for $\alpha_k \geq 0$ and $\sum_k \alpha_k \leq 1$, and they have imaginary part of magnitude at most $\sum_k \alpha_k \sin(k \theta_n)$. Since each $\cos(k\theta_n) \pm i \sin (k\theta_n)$ lies in $\Pi_n$, we see that $\lambda$ is a convex combination of points in $\Pi_n$, scaled by the constant $\sum_k \alpha_k \in [0,1]$ and with imaginary part also scaled by some constant in $[0,1]$. Hence $\lambda, \ol{\lambda} \in \Pi_n$, so that the proof is complete.
\end{proof}

\begin{corollary}\label{DihedralFaithfulCorollary}
The faithful hull spectra of $D_{2n}$ are given by \[\Pi_2 \cup \bigcup_{j \in J} \Pi_j\] for any collection $J$ of positive integers whose least common multiple is $n$. The maximal hull spectrum is then \[\Pi_2 \cup \Pi_n\]
\end{corollary}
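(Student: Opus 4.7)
The plan is to apply Lemma~\ref{IrrepLemma} to decompose any $D_{2n}$-representation $\rho$ into irreducibles and read off $\HS(\rho)$ from those summands. By Observations~\ref{DihedralOddObservation} and~\ref{DihedralEvenObservation}, each irreducible is either one-dimensional (with hull spectrum $\Pi_1$ or $\Pi_2$) or a two-dimensional $\rho_k$ (with hull spectrum $\Pi_{n/\gcd(n,k)} \cup \Pi_2$ by Theorem~\ref{DihedralIrrepTheorem}). For $n \geq 3$, $D_{2n}$ is nonabelian, so any faithful representation must contain at least one two-dimensional summand; hence $\Pi_2 \subseteq \HS(\rho)$ and, letting $a_i = n/\gcd(n, k_i)$ range over the two-dimensional summands, $\HS(\rho) = \Pi_2 \cup \bigcup_i \Pi_{a_i}$, with each $a_i \mid n$.

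The crux is translating faithfulness into an arithmetic condition on the multiset $\{a_i\}$. Since $\ker \rho_{k_i} = \langle r^{a_i} \rangle$, the two-dimensional summands together have kernel $\langle r^{\mrm{lcm}(a_i)} \rangle$. Each one-dimensional summand $\chi$ has $\chi(r) \in \{1, -1\}$ and so kills either all of $\langle r \rangle$ or only $\langle r^2 \rangle$ on the rotation subgroup (the latter existing only when $n$ is even). Analyzing the intersection of all kernels, $\rho$ is faithful exactly when $\mrm{lcm}(a_i) = n$, or else when $n$ is even, $\mrm{lcm}(a_i) = n/2$ is odd, and a summand $\chi$ with $\chi(r) = -1$ is included. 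In either case, setting $J = \{a_i\}$ (augmented by $\{2\}$ in the second case, which leaves $\HS(\rho)$ unchanged because $\Pi_2$ is already present) yields a collection with $\mrm{lcm}(J) = n$ and $\HS(\rho) = \Pi_2 \cup \bigcup_{j \in J} \Pi_j$.

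Conversely, given $J$ with $\mrm{lcm}(J) = n$ (so each $j \in J$ divides $n$), I would construct $\rho = \bigoplus_{j \in J,\, j \geq 3} \rho_{n/j}$, noting that $j \mid n$ and $j \geq 3$ ensures $\rho_{n/j}$ lies in the valid range of two-dimensional irreducibles from Observations~\ref{DihedralOddObservation}--\ref{DihedralEvenObservation}. If the sub-lcm over $j \geq 3$ already equals $n$, the representation is faithful by the kernel computation above; the only other possibility allowed by $\mrm{lcm}(J) = n$ is $n \equiv 2 \pmod{4}$ with $2 \in J$ supplying the missing factor of $2$, in which case adjoining a one-dimensional summand with $\chi(r) = -1$ restores faithfulness by killing the surviving rotation $r^{n/2}$ (precisely because $n/2$ is odd) without changing the hull spectrum. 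The main obstacle is this bookkeeping, essentially a case analysis tied to the parity of $n$ and $n/2$.

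Finally, the maximal hull spectrum follows because $\reg(D_{2n})$ contains every irreducible summand, so $\HS(\reg(D_{2n})) = \Pi_2 \cup \bigcup_{a \mid n} \Pi_a$; since $\Pi_a \subseteq \Pi_n$ whenever $a \mid n$, this collapses to $\Pi_2 \cup \Pi_n$.
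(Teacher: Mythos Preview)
Your proof is correct and follows essentially the same approach as the paper: decompose an arbitrary representation into irreducibles via Lemma~\ref{IrrepLemma}, apply Theorem~\ref{DihedralIrrepTheorem} to each two-dimensional summand, and translate faithfulness into a divisibility condition on the resulting $a_i = n/\gcd(n,k_i)$. Your explicit kernel computation $\ker\rho_k=\langle r^{a_k}\rangle$ and the resulting case analysis (including the $n\equiv 2\pmod 4$ subtlety where a one-dimensional summand with $\chi(r)=-1$ supplies the missing factor of $2$) is in fact more careful than the paper's brief argument, which leaves these faithfulness verifications largely implicit.
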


\begin{proof}
By Lemma \ref{LowerBound}, we know that any faithful hull spectrum of $D_{2n}$ contains $\Pi_2 \cup \bigcup_{p} \Pi_p$ where $p$ ranges over the prime factors of $n$. In fact, this hull spectrum is realizable by taking the direct sum over representations $\rho_k$ such that $n/\gcd(n,k)$ is an odd prime.

Now observe that whenever $m>2$ divides $n$, the hull spectrum $\Pi_m \cup \Pi_2$ is realizable by $\rho_{n/m}$. Since we may consider the direct sum of as many irreducible representations as we want, and since no other hull spectrum is realizable by any irreducible representation, our result follows.
\end{proof}

\section{The Quaternion Group}

We now shift our attention to the quaternion group
\[ Q_8 = \langle r,s \mid r^4=1,\; s^2=r^2,\; s^{-1}rs = r^{-1} \rangle. \]
Note that every element of $Q_8$ can be written as $r^k s^j$ for $k=0,1,2,3$ and $j=0,1$. Following the place of the previous section, we will first examine the irreducible representations of $Q_8$ (see \cite{DicyclicIrreps}).

\begin{observation}\label{DicyclicEvenObservation}
Notice that $D_4 = Q_8/Z$ where $Z=\{1,r^2\}$ is the center of $Q_8$, so many $Q_8$-representations factor through $D_4 = \ZZ/2\ZZ \times \ZZ/2\ZZ$. In particular, there are four such irreducibles, each of degree 1 (since $D_4$ is abelian). These irreducibles all have image contained in $\{-1, 1\}$ and map $r^2=s^2$ to 1. The remaining irreducible is $\rho : Q_8 \to GL_2(\CC)$ given by
\begin{align*}
    \rho(r) = \begin{bmatrix} i & 0 \\ 0 & -i \end{bmatrix}
    \quad \quad \quad
    \rho(s) = \begin{bmatrix} 0 & -1 \\ 1 & 0 \end{bmatrix}
\end{align*}
\end{observation}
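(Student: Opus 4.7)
The plan is to prove the observation in two stages: first identify the four one-dimensional irreducibles via the quotient by the center, and then show that the two-dimensional $\rho$ given in the statement is the unique remaining irreducible.

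First I would compute the center $Z$ of $Q_8$. Since $s^{-1}rs = r^{-1}$, we have $s^{-1}r^2 s = (s^{-1}rs)^2 = r^{-2} = r^2$ (using $r^4 = 1$), so $r^2$ commutes with $s$ and obviously with $r$; together with $s^2 = r^2$, this gives $Z \supseteq \{1, r^2\}$. On the other hand, $r$ and $s$ are not central (e.g.\ $s^{-1}rs = r^{-1} \neq r$), so in fact $Z = \{1, r^2\}$. The quotient $Q_8/Z$ has order $4$; the images $\bar r, \bar s$ both have order $2$ (because $r^2, s^2 \in Z$) and commute (because $s^{-1}rs \equiv r \pmod Z$, as $r^{-1} = r \cdot r^{-2} = r \cdot (r^2)^{-1}$). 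Hence $Q_8/Z \cong \ZZ/2\ZZ \times \ZZ/2\ZZ$, which is exactly $D_4$ in the paper's indexing. Its four characters, all valued in $\{\pm 1\}$, lift to four inequivalent one-dimensional representations of $Q_8$, and each such lift kills $r^2$, i.e.\ sends $r^2 = s^2$ to $1$.

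Next I would apply the dimension formula $\sum_i d_i^2 = |Q_8| = 8$. The four one-dimensional irreducibles contribute $4$, leaving $4 = d^2$ for the remaining irreducibles, which forces a single remaining irreducible of dimension $2$. It suffices, therefore, to exhibit any two-dimensional irreducible representation of $Q_8$; the given $\rho$ is the natural candidate. I would verify the defining relations by direct computation: $\rho(r)^4 = \mrm{diag}(i^4,(-i)^4) = I$; $\rho(s)^2 = -I = \rho(r)^2$; and $\rho(s)^{-1}\rho(r)\rho(s) = \mrm{diag}(-i,i) = \rho(r)^{-1}$. These are mechanical.

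Finally I would check that $\rho$ is irreducible and is not equivalent to any of the four one-dimensional representations. The latter is immediate because $\rho(r^2) = -I \neq I$, whereas all four one-dimensional irreducibles are trivial on $r^2$. For irreducibility, any $\rho(r)$-invariant line must be an eigenline of $\rho(r)$, i.e.\ $\CC e_1$ or $\CC e_2$; but $\rho(s)$ sends $e_1 \mapsto e_2$ and $e_2 \mapsto -e_1$, so neither eigenline is $\rho(s)$-invariant. Thus $\rho$ has no common one-dimensional invariant subspace and is irreducible. Together with the four one-dimensional irreducibles, this accounts for all five irreducibles predicted by the dimension count, completing the classification. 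The only step with any subtlety is the identification $Q_8/Z \cong D_4$ and confirming that the four one-dimensional representations really are the ones that factor through this quotient, but this follows immediately from the universal property of the abelianization, since $Z = [Q_8, Q_8]$ here.
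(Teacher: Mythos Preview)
Your argument is correct and complete: the computation of the center, the identification $Q_8/Z \cong \ZZ/2\ZZ \times \ZZ/2\ZZ$, the dimension count forcing a single two-dimensional irreducible, the verification that $\rho$ satisfies the defining relations, and the irreducibility check via eigenlines of $\rho(r)$ are all sound. The remark at the end that $Z = [Q_8,Q_8]$ is also correct and neatly explains why every one-dimensional representation factors through $Q_8/Z$.

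The paper, however, does not actually prove this observation; it simply records the classification with a reference to \cite{DicyclicIrreps} and moves on. So your write-up is not so much a different route as a fully worked-out justification of something the paper takes as known. If anything, your version is more self-contained than what the paper provides.
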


\begin{theorem}
With $\rho$ defined as above, we have
\[\HS(\rho(Q_8)) = \Pi_4\]
\end{theorem}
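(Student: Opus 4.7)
The plan is to exploit a Pauli-like algebraic structure on $\rho(Q_8)$ to write the spectrum of an arbitrary convex combination in closed form. The key observation is that the eight elements of $\rho(Q_8)$ come in four antipodal pairs: $\pm I$, $\pm\rho(r)$, $\pm\rho(s)$, $\pm\rho(rs)$ (since $\rho(r^2) = -I$, $\rho(r^3) = -\rho(r)$, $\rho(r^2 s) = -\rho(s)$, and $\rho(r^3 s) = -\rho(rs)$). Absorbing the signs into the coefficients, every $C \in \Co(\rho(Q_8))$ can be written as
\[ C = aI + b\,\rho(r) + c\,\rho(s) + d\,\rho(rs) \]
with $a,b,c,d \in \RR$ satisfying $|a|+|b|+|c|+|d| \leq 1$, and conversely every such quadruple is realized by a suitable convex combination.

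Next, using the group relations $r^4=1$, $s^2=r^2$, and $s^{-1}rs=r^{-1}$ (or by direct $2\times 2$ computation with the given matrices), I would verify that each of $\rho(r), \rho(s), \rho(rs)$ squares to $-I$ and that the three matrices pairwise anticommute. Setting $M = b\,\rho(r) + c\,\rho(s) + d\,\rho(rs)$, the pairwise anticommutation kills every cross term in $M^2$, yielding
\[ M^2 = -(b^2+c^2+d^2)\,I. \]
Hence the eigenvalues of $M$ are $\pm i\sqrt{b^2+c^2+d^2}$, and the eigenvalues of $C = aI + M$ are
\[ \lambda_\pm = a \pm i\sqrt{b^2+c^2+d^2}. \]

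To identify the achievable eigenvalues with $\Pi_4$, I would apply the norm inequality $\sqrt{b^2+c^2+d^2} \leq |b|+|c|+|d|$ to conclude
\[ |\Re \lambda_\pm| + |\Im \lambda_\pm| \leq |a| + \sqrt{b^2+c^2+d^2} \leq |a|+|b|+|c|+|d| \leq 1, \]
which places $\lambda_\pm$ inside the square $\Pi_4 = \{z \in \CC : |\Re z|+|\Im z| \leq 1\}$, the convex hull of the fourth roots of unity. For the reverse containment, Lemma \ref{LowerBound} applied to $\langle\rho(r)\rangle$ gives $\Pi_4 \subseteq \HS(\rho(Q_8))$ because $\rho(r)$ has primitive 4th roots of unity as eigenvalues; alternatively, setting $c=d=0$ in the parametrization already sweeps out every point of $\Pi_4$ as $(a,b)$ ranges over $|a|+|b|\leq 1$.

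I do not anticipate a serious obstacle: the only real content is the anticommutation verification, which is routine. The main conceptual point that keeps the argument compact is that $\rho(r), \rho(s), \rho(rs)$ satisfy the Pauli-type relations $X^2 = -I$, $XY+YX = 0$, so $M^2$ collapses to a scalar multiple of $I$ and the spectrum is captured by the single real quantity $\sqrt{b^2+c^2+d^2}$. If anything delicate arises, it will be cleanly separating the lower bound (which is cheap) from the upper bound (where the $\ell^1$--$\ell^2$ inequality does all the geometric work).
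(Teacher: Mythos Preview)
Your proof is correct and is essentially the paper's argument in different clothing: your $(a,b,c,d)$ are exactly the paper's $(\alpha_0',\alpha_1',\beta_0',\beta_1')$, both approaches arrive at the eigenvalue formula $a \pm i\sqrt{b^2+c^2+d^2}$, and both finish with the same $\ell^2\leq\ell^1$ inequality $|a|+\sqrt{b^2+c^2+d^2}\leq |a|+|b|+|c|+|d|\leq 1$. The only difference is presentational---the paper writes out the $2\times 2$ matrix and reads off the characteristic polynomial directly, whereas you invoke the Pauli-type anticommutation relations to compute $M^2=-(b^2+c^2+d^2)I$ without coordinates; the latter is arguably cleaner but not a genuinely different route.
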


\begin{proof}
Here, we write $r$ to refer to the matrix $\rho(r)$ and $s$ to refer to $\rho(s)$. As before, the inclusion $\HS(\rho(Q_8)) \supseteq \Pi_4$ is trivial by Lemma \ref{SubgroupLemma}, since $\HS(\langle r \rangle) = \Pi_4$. Now let $C \in \Co(G)$, so we may write
\begin{align*}
    C &= \sum_{k=0}^3 \alpha_j r^k + \sum_{k=0}^3 \beta_j r^ks \\
    &= \begin{bmatrix}
    \sum_{k=0}^3 \alpha_k i^k & -\sum_{k=0}^3 \beta_k i^k \\
    \sum_{k=0}^3 \beta_k (-i)^k & \sum_{k=0}^3 \alpha_k (-i)^k
    \end{bmatrix}
\end{align*}
for some $\alpha_j, \beta_j \geq 0$ satisfying $\sum_j (\alpha_j + \beta_j) = 1$. Then with $\alpha_j' = \alpha_j-\alpha_{j+2}$ and $\beta_j' = \beta_j-\beta_{j+2}$, we get
\begin{align*}
    C &= \begin{bmatrix}
    \alpha_0' + \alpha_1' i & -\beta_0' - \beta_1' i \\
    \beta_0' - \beta_1' i & \alpha_0' - \alpha_1' i
    \end{bmatrix},
\end{align*}
so the eigenvalues of $C$ are $\alpha_0' \pm i \sqrt{(\alpha_1')^2 + (\beta_0')^2 + (\beta_1')^2}$. Thus, the imaginary part of each eigenvalue $\lambda$ has magnitude at most $|\alpha_1'| + |\beta_0'| + |\beta_1'|$, and the real part has magnitude $|\alpha_0|$. Then
\begin{align*}
    |\text{Re}\; \lambda| + |\text{Im}\; \lambda| &\leq |\alpha_0| + |\alpha_0'| + |\beta_0'| + |\beta_1'| \\
    &\leq \sum_{k=0}^3 (\alpha_j + \beta_j) \\
    &= 1,
\end{align*}
so indeed $\lambda \in \Pi_4$, as desired. 
\end{proof}

\begin{corollary}
The only faithful hull spectrum (and maximal hull spectrum) of $Q_8$ is $\Pi_4$.
\end{corollary}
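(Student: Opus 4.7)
The plan is to combine the classification of irreducible representations of $Q_8$ given in Observation \ref{DicyclicEvenObservation} with Lemma \ref{IrrepLemma}, which reduces the hull spectrum of any representation to the union of hull spectra of its irreducible summands.

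First I would observe that $Q_8$ admits exactly five irreducibles up to isomorphism: the four one-dimensional ones that factor through $Q_8/Z \cong \ZZ/2\ZZ \times \ZZ/2\ZZ$ (each killing the central element $r^2$), and the two-dimensional representation $\rho$ whose hull spectrum was just computed to be $\Pi_4$. The one-dimensional irreducibles have images contained in $\{-1,1\}$, so their hull spectra are either $\{1\}$ or $\Pi_2 = [-1,1]$; in either case, they are contained in $\Pi_4$.

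Next I would argue that every faithful representation of $Q_8$ must contain $\rho$ as a summand. Indeed, each one-dimensional irreducible sends $r^2$ to $1$ (since $r^2$ lies in the commutator subgroup, or more directly because it equals $s^2$ and each such character satisfies $\chi(r)^2 = \chi(s)^2 = 1$ forcing $\chi(r^2) = 1$). Consequently, any direct sum of only one-dimensional irreducibles contains $r^2$ in its kernel and cannot be faithful. Thus the irreducible decomposition of any faithful representation must include $\rho$.

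Combining these two points with Lemma \ref{IrrepLemma}, the hull spectrum of any faithful representation equals the union of $\Pi_4$ (from the $\rho$ summand) with a subset of $\Pi_4$ (from any additional one-dimensional summands that may appear), and is therefore exactly $\Pi_4$. For the maximal hull spectrum, $\reg(Q_8)$ is faithful since it contains every irreducible as a summand, so it also realizes $\Pi_4$. I do not anticipate a main obstacle here; once the central element argument is made explicit, the remainder is a direct application of Lemma \ref{IrrepLemma}.
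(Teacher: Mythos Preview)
Your proposal is correct and follows essentially the same approach as the paper: both argue that $r^2$ lies in the kernel of every one-dimensional irreducible, forcing any faithful representation to contain $\rho$, and then conclude $\HS = \Pi_4$ via the preceding theorem and Lemma \ref{IrrepLemma}. Your version is simply more explicit about why the one-dimensional summands contribute nothing beyond $\Pi_4$.
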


\begin{proof}
Recall that $r^2=s^2$ belongs to the kernel of each degree-1 representation of $Q_8$, so any faithful representation must contain a copy of $\rho$. Thus, the corollary follows by the preceding theorem.
\end{proof}

\section{Doubly Stochastic Matrices and the Symmetric Group} \label{sec:7}

Work thus far in trying to understand $DS_n$ has been matricial. The permutation matrix representation of $S_n$ is a direct sum of the trivial representation and the standard representation. Thus, by Lemma \ref{IrrepLemma}, $DS_n$ is also the set of all eigenvalues achievable by convex combinations of matrices in the standard representation of $S_n$, which is of dimension $n-1$. Here, we note some known facts about $DS_n$ that may be proven via the group structure of $S_n$, perhaps more easily than using other proof methods. This analysis shows that some of the theory used to investigate $DS_n$ (such as the Perron-Frobenius theory of nonnegative matrices and assorted facts about doubly-stochastic matrices) may not be necessary for deducing information about the region.

\begin{proposition}\label{DS_nProps}
For $n \geq 2$, the region $DS_n$:
\begin{enumerate}
    \item Is star-shaped from any point in $[0,1]$
    \item Contains the polygons $\Pi_k$ for $k \leq n$
    \item Is contained in the convex hull of the polygons $\Pi_k$ for $k \leq n$ and thus within the unit disc.
\end{enumerate}
Moreover, for any doubly stochastic matrix $A \in M_n$ whose eigenvalues all lie on the unit circle, $A$ is a permutation matrix.
\end{proposition}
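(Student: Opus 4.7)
The plan is to reduce each claim to an earlier general result about $\HS(G)$ by taking $G \subseteq GL_n(\CC)$ to be the group of $n \times n$ permutation matrices. By Birkhoff's theorem $\Co(G)$ is precisely the set of doubly stochastic matrices, so $DS_n = \HS(G)$.

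For (1), I would simply observe that $G$ is a nontrivial finite matrix group for $n \geq 2$, and then invoke Corollary \ref{StarCorollary} directly to conclude that $\HS(G) = DS_n$ is star-shaped from every point in $[0,1]$. For (2), I would apply Lemma \ref{LowerBound}: for each $k \leq n$, choose a permutation consisting of a single $k$-cycle together with $n-k$ fixed points. Its permutation matrix has a primitive $k$th root of unity among its eigenvalues, so $k \in \mc{K}$ for all $k \leq n$, and Lemma \ref{LowerBound} gives $\bigcup_{k \leq n} \Pi_k \subseteq \HS(G) = DS_n$.

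For (3), I would use Lemma \ref{UpperBoundLemma}. The key point is that every permutation matrix $P_\sigma$ is normal (in fact unitary) and its spectrum consists of $k_i$th roots of unity, where $k_1, \dots, k_m$ are the cycle lengths of $\sigma$; since each $k_i \leq n$, each such eigenvalue lies in some $\Pi_k$ with $k \leq n$. Hence $\Co(\sigma(P_\sigma)) \subseteq \bigcup_{k \leq n} \Pi_k$ for every $\sigma \in S_n$, and Lemma \ref{UpperBoundLemma} yields $DS_n \subseteq \Co\bigl(\bigcup_{k \leq n} \Pi_k\bigr)$. Containment in the closed unit disc is then either immediate from $\Pi_k \subseteq D$ or follows separately from Lemma \ref{UnitBallLemma}.

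Finally, for the concluding sentence about doubly stochastic matrices with all eigenvalues on the unit circle, this is exactly Proposition \ref{ExtremalEigenvalues} applied to $G$: any such $A$ lies in $\Co(G)$ and therefore must be an element of $G$, i.e.\ a permutation matrix. I do not anticipate a serious obstacle here, since the proposition is essentially a bookkeeping exercise that funnels the general theory developed in Sections 2--4 through the Birkhoff identification $\Co(G) = \{\text{doubly stochastic matrices}\}$; the only thing to be careful about is verifying that the cycle-type argument in (3) covers all eigenvalues of all permutation matrices, which is standard.
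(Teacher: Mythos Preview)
Your proposal is correct and follows essentially the same route as the paper: the paper cites Corollary~\ref{StarCorollary} for (1), Corollary~\ref{TotalBoundsCorollary} (which packages Lemma~\ref{LowerBound} and Lemma~\ref{UpperBoundLemma}) for (2) and (3), and Proposition~\ref{ExtremalEigenvalues} for the final claim. The only cosmetic issue is the notational collision in writing $\Co(\sigma(P_\sigma))$, where $\sigma$ denotes both a permutation and the spectrum; you should write $\Co(\mathrm{spec}(P_\sigma))$ or similar.
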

\begin{proof}
Property (1) follows from Corollary \ref{StarCorollary}, and (2) and (3) both follow from Corollary \ref{TotalBoundsCorollary}. The last component is due to Proposition \ref{ExtremalEigenvalues}.
\end{proof}

Moreover, we can apply our results on the dihedral group to provide a new proof of the form of $DS_3$.

\begin{proposition}
$DS_3 = \Pi_2 \cup \Pi_3$
\end{proposition}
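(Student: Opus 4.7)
The plan is to reduce the claim about $DS_3$ directly to the dihedral case treated in Theorem \ref{DihedralIrrepTheorem}. By Birkhoff's theorem, the doubly stochastic matrices of size $3$ are exactly $\Co(P(S_3))$, where $P$ denotes the permutation representation. So $DS_3 = \HS(P(S_3))$, and I just have to compute this hull spectrum.

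First I would decompose $P$ into irreducibles. It is standard that $P = \mathbf{1} \oplus \sigma$, where $\mathbf{1}$ is the trivial representation and $\sigma$ is the standard $2$-dimensional representation of $S_3$ on $\{(x_1,x_2,x_3) : x_1+x_2+x_3 = 0\}$. By Lemma \ref{IrrepLemma},
\[ DS_3 = \HS(P(S_3)) = \HS(\mathbf{1}(S_3)) \cup \HS(\sigma(S_3)) = \{1\} \cup \HS(\sigma(S_3)). \]

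Next I would identify $\sigma$ with a known dihedral irreducible. Since $S_3 \cong D_6$ (the symmetries of the triangle), and $\sigma$ is a faithful $2$-dimensional irreducible representation of $D_6$, by Observation \ref{DihedralOddObservation} it must be isomorphic to the representation $\rho_1$ of Theorem \ref{DihedralIrrepTheorem} with $n=3$, $k=1$. Concretely, a $3$-cycle must act by rotation by $2\pi/3$ and a transposition by a reflection, matching the formulas for $\rho_1(r)$ and $\rho_1(s)$. Invoking Lemma \ref{IsomorphicRepresentationLemma}, $\HS(\sigma(S_3)) = \HS(\rho_1(D_6))$, and Theorem \ref{DihedralIrrepTheorem} gives $\HS(\rho_1(D_6)) = \Pi_3 \cup \Pi_2$ (as $a = 3/\gcd(3,1) = 3$).

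Combining the two parts, $DS_3 = \{1\} \cup \Pi_3 \cup \Pi_2 = \Pi_2 \cup \Pi_3$, since $1 \in \Pi_2 \cap \Pi_3$. There is really no obstacle here beyond bookkeeping: the only thing to check carefully is the identification $\sigma \cong \rho_1$, which is a brief character-table or explicit-matrix computation. This gives the proof.
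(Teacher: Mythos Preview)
Your proof is correct and follows essentially the same route as the paper: both identify $S_3 \cong D_6$ and reduce to the dihedral analysis. The paper invokes Corollary~\ref{DihedralFaithfulCorollary} directly (since the permutation representation is faithful and $3$ is prime, the only possible faithful hull spectrum is $\Pi_2 \cup \Pi_3$), whereas you unpack that corollary by hand via Lemma~\ref{IrrepLemma} and Theorem~\ref{DihedralIrrepTheorem}; this is the same argument at a slightly finer level of detail.
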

\begin{proof}
Since $S_3 \cong D_6$ and since the permutation matrix representation of $S_3$ is faithful, Corollary \ref{DihedralFaithfulCorollary} proves the result.
\end{proof}

The upper bound on $DS_n$ given by the field of values bound in Proposition \ref{DS_nProps} does not add information because of the work of Karpelevich in \cite{Karpelevich}. The region $K_n$ of eigenvalues achieved by row stochastic matrices is an outer estimate of $DS_n$, and is in fact strictly smaller than the field of values bound for $n \geq 3$. The boundary of $K_n$ consists of concave arcs between adjacent roots of unity, and these arcs lie within the line segments between adjacent roots of unity given by the field of values bound.

Again, we note that although all finite abelian groups, dihedral groups, the quaternion group, and the standard representation of $S_n$ for $n \leq 4$ have hull spectra corresponding to the polygon lower bound of Lemma \ref{LowerBound}, the standard representation of $S_5$ does not. This phenomenon makes the question of determining $DS_n$ all the more compelling. Here, we note that the standard representation of the alternating group $A_n$ also does not in general agree with the lower bound of Lemma \ref{LowerBound}. First, we note the form of the lower bound:

\begin{lemma}
The hull spectrum of the standard representation $\rho$ of $A_n$ contains $\Pi_k$ for all odd $k \leq n$ and all even $k \leq n-2$.
\end{lemma}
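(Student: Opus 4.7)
The plan is to reduce to Lemma \ref{LowerBound}: for each $k$ in the stated range, it suffices to exhibit some $\sigma \in A_n$ such that $\rho(\sigma)$ has a primitive $k$th root of unity as an eigenvalue. The key structural observation is that the full $n$-dimensional permutation representation $\pi$ of $S_n$ decomposes as the trivial summand plus the standard representation $\rho$, so the multiset of eigenvalues of $\rho(\sigma)$ is obtained from that of $\pi(\sigma)$ by removing a single copy of $1$. Thus any eigenvalue of $\pi(\sigma)$ other than $1$ survives in $\rho(\sigma)$, and in particular a primitive $k$th root of unity (which for $k \geq 2$ is never equal to $1$) persists. The case $k = 1$ is trivial, since $\rho(\mathrm{id}) = I$.

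For odd $k$ with $1 < k \leq n$, I would take $\sigma = (1\,2\,\cdots\,k)$. Because $k$ is odd, this single cycle is an even permutation and lies in $A_n$; its permutation matrix has spectrum equal to all $k$th roots of unity together with $n-k$ extra copies of $1$ from the fixed points, which supplies a primitive $k$th root of unity in the spectrum of $\rho(\sigma)$.

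For even $k$ with $2 \leq k \leq n-2$, a single $k$-cycle is odd, so I would instead take $\sigma = (1\,2\,\cdots\,k)(k+1\,k+2)$. This uses $k+2 \leq n$ letters and is a product of two odd cycles, hence an even permutation. Its permutation matrix still has every $k$th root of unity among its eigenvalues (contributed by the $k$-cycle), and by the structural observation above these carry over to $\rho(\sigma)$. Applying Lemma \ref{LowerBound} in both cases yields the desired inclusions.

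No step is genuinely hard; the proof is bookkeeping once the right cycle representatives are chosen. The only subtlety is that a $k$-cycle has sign $(-1)^{k-1}$, which forces the use of an auxiliary transposition in the even case and explains why the even range stops at $n - 2$ rather than at $n$.
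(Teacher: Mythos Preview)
Your proof is correct and follows essentially the same approach as the paper: exhibit, for each relevant $k$, an element of $A_n$ (a $k$-cycle when $k$ is odd, a $k$-cycle times a disjoint transposition when $k$ is even) whose permutation matrix has a primitive $k$th root of unity as an eigenvalue, and invoke Lemma~\ref{LowerBound}. Your version is simply more explicit than the paper's two-line argument, in particular in spelling out why the eigenvalue survives passage from the permutation representation to the standard representation.
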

\begin{proof}
For all odd $k \leq n$, $A_n$ contains any k-cycle. For even $k \leq n-2$, $A_n$ contains the product of any k-cycle with a disjoint 2-cycle.
\end{proof}

Now it may be seen through direct computation that $\HS(\rho(A_n))$ is not contained within the polygon lower bound for some small $n > 3$. For example, the standard representation $\rho$ of $A_4$ has its hull spectrum bounded below by $\Pi_2 \cup \Pi_3$. Consider the convex combinations of the even permutation matrices corresponding to the permutations $(123)$ and $(12)(34)$:

\[C_t = (1-t) \begin{bmatrix}
0 & 1 & 0 & 0\\
0 & 0 & 1 & 0\\
1 & 0 & 0 & 0\\
0 & 0 & 0 & 1\\
\end{bmatrix} + t \begin{bmatrix}
0 & 1 & 0 & 0\\
1 & 0 & 0 & 0\\
0 & 0 & 0 & 1\\
0 & 0 & 1 & 0\\
\end{bmatrix}\]

Note that $C_t$ has a degree-$4$ characteristic polynomial that can be factored into $(x-1)q(x)$ for a degree $3$ polynomial $q(x) \in \RR[x]$, so its roots can be determined with standard techniques. For $t\in [.4,.9]$ (this range is not tight), $C_t$ has a non-real eigenvalue $\lambda$ with $\mrm{Re}\; \lambda < -.5$, which means that it is to the left of $\Pi_3$ and does not belong to $\Pi_2$. Thus, there is a large portion of the hull spectrum of $\rho(A_4)$ that lies outside of the polygon lower bounds.

\section*{Declaration of competing interest}
None.

\section*{Acknowledgements}

We would like to thank Jamison Barsotti for bringing our attention to several useful resources that we were unaware of. We also wish to thank the anonymous referee for carefully reading our work and suggesting changes that have improved this paper. This work was supported by the National Science Foundation grant DMS \#1757603.

\bibliographystyle{ieeetr}
\bibliography{refs}

\end{document}